\documentclass[11pt,a4paper]{amsart}
\usepackage[utf8]{inputenc}
\usepackage[T1]{fontenc}
\usepackage[UKenglish]{babel}
\usepackage[a4paper,margin=1in]{geometry}

\usepackage{amsmath,amsthm,amsfonts,amssymb}
\usepackage{mathrsfs,dsfont}
\usepackage{graphicx}
\usepackage{url}
\usepackage{verbatim}
\usepackage[dvipsnames]{xcolor}

\usepackage{import}
\usepackage{xifthen}
\usepackage{pdfpages}
\usepackage{transparent}

\usepackage{hyperref}
\usepackage{mathtools}

\usepackage{bbm}

\usepackage{marginnote}

\makeatletter
\def\namedlabel#1#2{\begingroup
	#2%
	\def\@currentlabel{#2}%
	\label{#1}\endgroup
}
\makeatother

\DeclarePairedDelimiter\norm{\lVert}{\rVert}%

\theoremstyle{plain}
\newtheorem{theorem}{Theorem}[section]
\newtheorem{corollary}[theorem]{Corollary}
\newtheorem{lemma}[theorem]{Lemma}

\theoremstyle{definition}

\newtheorem{example}[theorem]{Example}

\numberwithin{equation}{section}

\renewcommand\labelenumi{\textup{\alph{enumi})}}

\renewcommand\theenumi\labelenumi
\makeatletter\renewcommand{\p@enumii}{}\makeatother 

\renewcommand{\leq}{\leqslant}

\renewcommand{\geq}{\geqslant}
\renewcommand{\ge}{\geqslant}

\newcommand{\R}{\mathds{R}}

\newcommand{\mL}{\mathbf{L}}

\newcommand{\I}{\mathds{1}}

\begin{document}
	
	\title[Smoothing of intrinsic semigroups in non-local settings]
	{Smoothing on $L^1$ for ground state transformed semigroups in non-local settings}
	
	\date{\today}
	
		\author[M.~Baraniewicz]{Miłosz Baraniewicz}
	\address[M.~Baraniewicz]{Faculty of Pure and Applied Mathematics\\ Wroc{\l}aw University of Science and Technology\\ ul. Wybrze{\.z}e Wyspia{\'n}skiego 27, 50-370 Wroc{\l}aw, Poland}
	\email{milosz.baraniewicz@pwr.edu.pl}
	
	\author[K.~Kaleta]{Kamil Kaleta}
	\address[K.~Kaleta]{Faculty of Pure and Applied Mathematics\\ Wroc{\l}aw University of Science and Technology\\ ul. Wybrze{\.z}e Wyspia{\'n}skiego 27, 50-370 Wroc{\l}aw, Poland}
	\email{kamil.kaleta@pwr.edu.pl}

	\thanks{
	Research was supported by National Science Centre, Poland, grant no.\ 2019/35/B/ST1/02421.
}
		\begin{abstract}
We study the $L^1$-smoothing properties for a broad class of semigroups arising from the ground state transformation of Schr\"odinger semigroups with confining potentials, associated with non-local L\'evy operators, for which (asymptotic) ultracontractivity and hypercontractivity fail. Our work is inspired by Talagrand’s \emph{convolution conjecture} in the discrete cube setting, as well as by subsequent developments on the classical Ornstein--Uhlenbeck semigroup. The estimates we provide exhibit a clear dependence on the potential and the L\'evy measure defining the kinetic term operator, and they yield a description of the semigroups’ action on $L^1$ in terms of Orlicz spaces. Our framework is quite general, encompassing fractional and relativistic Laplacians as kinetic operators. The results are illustrated by numerous examples, demonstrating that the $L^1$-regularizing effects become stronger as $t \uparrow \infty$. 

	\medskip
	
\noindent
\emph{Key-words}:\ hypercontractivity, intrinsic semigroup, non-local Schr\"odinger operator, ground state, heat kernel, fractional Laplacian, relativistic Laplacian, L\'evy process, decay rate.

\medskip

\noindent
2020 {\it MS Classification}:\ 47D08, 60G51, 47D07, 47G30, 60J35, 35S05. 
	\end{abstract}

	\maketitle
	\section{Motivation, introduction, setting and results}

	\subsection{Motivation and introduction} Let $\mu$ be the standard Gaussian measure on $\R^d$, where $d \in \left\{1,2,\ldots\right\}$, and let $\{Q_t:t \geq 0\}$ be the classical \emph{Ornstein--Uhlenbeck semigroup}, defined by
		\[
		Q_t h(x) = \int_{\R^d} h(e^{-t} x + \sqrt{1-e^{-2t}} y) \mu(dy), 
		\]
	for any admissible function $h$ on $\R^d$. Nelson's \emph{hypercontractivity} theorem \cite{Nelson73} states that for $1 < p < q < \infty$, there exists $t(p,q) >0$ such that for all $t \geq t(p,q)$, the operator $Q_t: L^p(\mu) \to L^q(\mu)$ is contractive. This implies that $Q_t$ has a regularizing effect on functions $h \in L^p(\mu)$ whenever $p>1$. Hypercontractivity is a powerful tool in the theory of operator semigroups, with numerous interesting and important connections and applications; see, e.g., \cite{DGS,Eldan-Lee,Gross} and additional references therein. 
	
	This raises a natural question:\ what happens when $p=1$? Research in this direction was inspired by Talagrand's \emph{convolution conjecture} \cite{Talagrand}. It was originally formulated in a discrete setting for the average of the heat semigroup on the hypercube but naturally extends to a much broader framework, including the Ornstein–Uhlenbeck semigroup and even the semigroups studied in the present paper. The conjecture states that, for every $t>0$ there exists a function $\psi_t :[1,\infty) \to [1,\infty)$ such that $\psi_t(u) \to \infty$ as $u \to \infty$, and for every $h \in L^1(\mu)$ with $\left\|h\right\|_{L^1(\mu)} = 1$ and $u \geq 1$, the following holds: 
  \[
	\mu\left(\left\{x \in \R^d: |Q_t h(x)| > u \right\}\right) \leq  \frac{1}{u \, \psi_t(u)}.
	\]
We have $\int Q_t h(x) \mu(dx) = \int h(x)\mu(dx)$ and, by Markov's inequality,
\[
	\mu\left(\left\{x \in \R^d: |Q_t h(x)| > u \right\}\right) \leq  \frac{1}{u} , \quad \text{whenever} \ \left\|h\right\|_{L^1(\mu)} = 1.
\]
Therefore, the function $\psi_t(u)$ can be interpreted as the rate at which the action of the operator $Q_t$ smooths the functions $h \in L^1(\mu)$. Talagrand observed that the best decay rate one can expect is $\psi_t(u) = c(t) \sqrt{\log u}$, where the constant $c(t)$ depends only on $t$. 

The first positive answear to this problem for the Ornstein--Uhlenbeck semigroup was provided by Ball, Barthe, Bednorz, Oleszkiewicz, and Wolff \cite{BBBOW}, who proved that the estimate holds true with $\psi_t(u) =  c(t,d)\sqrt{\log u}/\log\log u$, where the constant $c(t,d)$ depends on the dimension $d$. 
Later, Eldan and Lee \cite{Eldan-Lee} improved the bound to $\psi_t(u) =  c(t)\sqrt{\log u/\log\log u}$, where the constant $c(t)$ depends only on $t$. Finally, Lehec \cite{Lehec} obtained the optimal estimate with $\psi_t(u) = c \, \min\{1,t\} \sqrt{\log u}$, where $c$ is an absolute constant. We refer the reader to \cite{Eldan-Lee} for an excellent overview of this problem and methods, along with a discussion of various connections to related topics and works. See also the work of Gozlan, Madiman, Roberto, and Samson \cite{GMRS} for an interesting continuation of the research on deviation inequalities, and that of Gozlan, Li, Madiman, Roberto, and Samson \cite{GLMRS} for extensions to more general diffusions, the $M/M/\infty$ queue model, and the Laguerre semigroup in dimension one.

The aim of this paper is to explore the $L^1$-smoothing effects for a broad class of semigroups $\{Q_t:t \geq 0\}$ arising from the ground state transformation of semigroups associated with Schr\"odinger operators $H=-L+V$. Here, $L$ denotes a non-local L\'evy operator, including, as particular cases, \emph{fractional} and \emph{relativistic Laplacians}, and $V$ denotes a confining potential,\ i.e.,\ $V(x) \to \infty$ as $|x| \to \infty$. These semigroups can be seen as natural non-local analogues, or even extensions, of classical diffusion semigroups such as the Ornstein--Uhlenbeck semigroup. We note that the stationary measures $\mu$ are no longer Gaussian, as they are now given by the squares of the ground state eigenfunctions $\varphi_0$ of the Schr\"odinger operators $H$.
Hypercontractivity and ultracontractivity (corresponding to the strongest $L^1$-to-$L^{\infty}$ smoothing) of the semigroups $\{Q_t:t \geq 0\}$ have been extensively studied. Both properties are known to hold when the potential $V$ grows sufficiently fast at infinity (see, for instance, Chen and Wang \cite{bib:ChW2015,bib:ChW2016}, Kulczycki and Siudeja \cite{bib:KS}, or Kaleta, Kwaśnicki, and Lőrinczi \cite{bib:KKL2018}). In contrast, the $L^1$-smoothing properties in non-ultracontractive regime remain completely unexplored. To our knowledge, this work is the first to address this problem within the non-local L\'evy framework. We present a unified approach that enables a systematic analysis and yields a solution for a broad class of $L$'s and $V$'s.

Our main results are presented in Section \ref{sec:results}, including the upper bound in Theorem \ref{thm:main} and the lower bounds in Theorems \ref{prop:main2} and \ref{prop:main3}. While these estimates are very much in the spirit of Talagrand's convolution conjecture, the structure of the rates we obtain differs significantly from those discussed above, reflecting the sharp dependence on the kinetic term L\'evy operators and the confining potentials. Corollary \ref{cor:cor1} characterizes the range $Q_t \big(L^1(\mu)\big)$ in terms of Orlicz spaces. 

To put our results into context and demonstrate some of their direct applications, we briefly discuss an example concerning the fractional Schr\"odinger operator with a power-logarithmic potential
\[
H = (-\Delta)^a + \log^{\theta}(1+|x|),
\qquad a\in(0,1),\ \theta>0.
\]
The long-time regularizing properties of the associated ground state-transformed semigroup $\{Q_t:t\ge0\}$
exhibit a sharp dichotomy depending on the parameter $\theta$. More precisely, it is known, see \cite{bib:ChW2015,bib:KKL2018}, that 

\smallskip
\noindent
$\bullet$ if $\theta \ge 1$, then hypercontractivity and asymptotic ultracontractivity hold; that is, there exists \( t_0>0 \) such that for all \( t\ge t_0 \) the operator \( Q_t \) maps \( L^1(\mu) \) continuously into \( L^{\infty}(\mu) \);

\smallskip
\noindent
$\bullet$ if $\theta<1$, neither ultracontractivity nor hypercontractivity holds, even for large times. In this regime, the smoothing properties of the semigroup $\{Q_t:t \ge 0\}$ acting on $L^1(\mu)$, as well as the structure of the range $Q_t(L^1(\mu))$, remain unknown.

\smallskip

Our present results solve the case $\theta<1$, showing that the semigroup still exhibits nontrivial $L^1$-smoothing of a weaker type.
More precisely, for each $t>0$, the operator $Q_t$ maps $L^1(\mu)$ continuously into Orlicz spaces $\mL^{\Phi}(\mu)$ associated with Young functions
\[
\Phi(u)=|u|\exp\!\big(c\,\log^{\theta}(e+|u|)\big),
\]
provided that \( c<c_1 t \), where \( c_1=c_1(d,a,\theta)>0 \) is an explicit constant.
Moreover, this result is essentially sharp:\ if $c > c_2 t$, for some explicit $c_2>0$, then such Orlicz regularization fails. Interestingly, $\mL^{\Phi}(\mu)$ \textbf{strictly} interpolates between $L^{1}(\mu)$ and \textbf{all} spaces $L^{1+\varepsilon}(\mu)$, $\varepsilon >0$. 

If we replace the operator $H$ by its relativistic counterpart with a power-type potential
\[
H = (-\Delta+m^{1/a})^a -m +|x|^{\theta},
\qquad a\in(0,1),\ m>0,\ \theta>0,
\]
then the same sharp transition at the critical threshold $\theta=1$ occurs, and we observe a similiar Orlicz regularization for $\theta<1$. A detailed discussion of these examples is provided in Section~\ref{sec:ex}.

To state the problem and results precisely, we begin by briefly outlining the setting. Readers already familiar with the main objects of our study may simply refer to Section \ref{sec:setting} for notation and proceed directly to Section \ref{sec:results}.

	\subsection{Non-local Schr\"odinger operators and ground state-transformed semigroups} \label{sec:setting}
We consider \emph{L\'evy operators} $(L,\mathcal D(L))$ in $L^2$-setting 
which are defined as \emph{Fourier multipliers} by
\begin{align} \label{def:gen}
    \widehat{L h}(\xi) = - \Psi(\xi) \widehat{h}(\xi), \quad \xi \in \R^d, \quad h \in \mathcal D(L):=\left\{v \in L^2(dx): \Psi \, \widehat{v} \in L^2(dx) \right\},
\end{align}	
where the symbol $\Psi$ is given by
\begin{align} \label{eq:symbol}
  \Psi(\xi) =  \frac{1}{2} A \xi \cdot \xi + \int \left(1-\cos(\xi \cdot z) \right)\nu(dz), \quad \xi \in \R^d.
\end{align} 
Here, $A \in \R^{d \times d}$ is a positive semidefinite matrix, and $\nu$ is a symmetric L\'evy measure, i.e.\ a positive Radon measure on $\R^d \setminus \left\{0\right\}$ satisfying $\int \left(1\wedge |z|^2\right)\,\nu(dz) < \infty$ and $\nu(-B)=\nu(B)$. 

The symmetry of the L\'evy measure $\nu$ is equivalent to $\Psi$ being real. Consequently, the L\'evy operator $L$ is self-adjoint. 
Moreover, it may happen that $A \equiv 0$, but we always assume that 
\begin{align} \label{eq:infinite_nu}
 \nu(\R^d \setminus \left\{0\right\})=\infty,
\end{align}
which guarantees that $L$ is nonlocal and unbounded. We further assume that $\nu$ is absolutely continuous with respect to the Lebesgue measure. For simplicity, we use the same symbol for its density i.e.,\ $\nu(dx)=\nu(x)dx$.

We note that every L\'evy operator $L$ generates a strongly continuous \emph{semigroup of convolution operators} $\{P_t:t \geq 0\}$ on $L^2(dx)$, where each $P_t$ is a contraction. In particular, $L$ is the generator of a \emph{L\'evy process} with values in $\R^d$. While this stochastic background is important to the authors, it will not be used in this paper, as the proposed approach is purely analytic. Our standard references to L\'evy operators, their convolution semigroups and the corresponding L\'evy processes are monographs and lecture notes by Jacob \cite{Jacob}, Schilling \cite{bib:Sch}, B\"ottcher, Schilling and Wang \cite{BSchW}, and Sato \cite{bib:Sat}. 

Further, we consider a Schr\"odinger operator 
\[
H=-L+V, \quad \text{acting on} \ \ L^2(dx),
\] 
where the kinetic term is as above, and the potential $V$ is a locally bounded  function on $\R^d$ such that $V(x) \to \infty$ as $|x| \to \infty$, called the \emph{confining potential}. Formally, $H$ is defined in the sense of quadratic forms as a bounded below, self-adjoint operator on $L^2(dx)$ \cite[Proposition 10.22]{Schmudgen}. 

To obtain the possibly sharpest results, we impose certain regularity assumptions on the L\'evy density $\nu$ and the confining potential $V$. Specifically, we assume that there exist a strictly decreasing, continuous function $f: (0,\infty) \to (0,\infty)$, a strictly increasing, continuous function $g: [0,\infty) \to (0,\infty)$, and constants $C_1, C_2 \geq 1$ such that
\[
  C_1^{-1} f(|x|) \leq \nu(x) \leq C_1 f(|x|), \quad x\in\R^d \setminus \{0\}, 
\]
and 
\[
  C_2^{-1} g(|x|) \leq V(x) \leq C_2 g(|x|), \quad |x| \geq R_0,
\]
for some $R_0>0$. We refer to $f$ and $g$ as the \emph{profiles} of the L\'evy density $\nu$ and the potential $V$, respectively. 
We further assume that $f$ and $g$ satisfy Assumption \eqref{A}, stated and discussed in Section \ref{sec:Assumption and notation}. This assumption comes from the recent paper \cite{KSch} by Kaleta and Schilling, as the estimates established therein are the starting point of our investigations. This framework is quite general and includes Schr\"odinger operators with kinetic terms such as \emph{fractional} and \emph{relativistic Laplacians}:
\[
L=-(-\Delta)^a, \qquad L=-(-\Delta+m^{1/a})^a+m, \quad \text{where} \  a \in (0,1), \  m>0.
\]
Operators of this type and the corresponding jump processes play a fundamental role in the scientific modeling of discontinuous and non-local phenomena, especially in PDEs, mathematical physics, and the biological sciences; see, e.g., the monographs by Abatangelo, Dipierro and Valdinoci \cite{bib:AbatangeloDipierroValdinoci2025}; Lieb and Seiringer \cite{bib:LiebSeiringer2009}; Dipierro, Giacomin and Valdinoci \cite{bib:DipierroGiacominValdinoci2024}; and the references therein.
Our standard reference to Schr\"odinger operators with L\'evy kinetic terms is the monograph \cite{bib:DC} by Demuth and van Casteren.

The Schr\"odinger semigroup $\{e^{-tH}:t \geq 0\}$ consists of bounded and self-adjoint operators on $L^2(dx)$. Under Assumption \eqref{A}, $e^{-tH}$ are integral operators given by 
\[
e^{-tH} f(x) = \int_{\R^d} u_t(x,y) f(y) dy, \quad f \in L^2(dx), \ t>0.
\]
The integral kernels $u_t(\cdot,\cdot)$, $t>0$, are continuous, positive, and symmetric on $\R^d \times \R^d$. Moreover, since $V$ is a confining potential, the operator $e^{-tH}$ is compact for every $t>0$ \cite{LSW,TTT,WW}. In particular, the spectra of $H$ and $e^{-tH}$ are discrete, the lowest eigenvalue $\lambda_0:=\inf \sigma(H)$ is simple, and the corresponding eigenfunction $\varphi_0 \in L^2(dx)$ is strictly positive \cite[Theorem XIII.43]{Reed-Simon}. We refer to $\lambda_0$ and $\varphi_0$ as the \emph{ground state eigenvalue} and \emph{ground state eigenfunction}, respectively. We assume the normalization $\left\|\varphi_0\right\|_{L^2(dx)} = 1$.

\medskip

We now introduce a new (probability) measure $\mu(dx) = \varphi_0^2(x) dx$ on $\R^d$ and consider the associated weighted space $L^2(\mu)$.
The main object of our study is the semigroup of operators $\{Q_t:t \geq 0\}$, defined by
\[
Q_t h(x) := \frac{e^{\lambda_0 t}}{\varphi_0(x)} e^{-tH}(h \varphi_0)(x) = \int_{\R^d} \frac{e^{\lambda_0 t} u_t(x,y)}{\varphi_0(x)\varphi_0(y)} h(y) \mu(dy), \quad h \in L^2(\mu).
\]
Clearly, $Q_t$ acts as an integral operator with the kernel 
\[
q_t(x,y) = \frac{e^{\lambda_0 t} u_t(x,y)}{\varphi_0(x)\varphi_0(y)}.
\]	
Using the eigenequations $e^{-tH} \varphi_0(x) = \int u_t(x,y) \varphi_0(y) dy = e^{-\lambda_0 t} \varphi_0(x)$, $t>0$, and 
Jensen's inequality, we can show that this expression defines a semigroup of contractions on each space $L^p(\mu)$ for every $1 \leq p \leq \infty$. Moreover, $Q_t$ inherits the self-adjointness of $e^{-tH}$. The semigroup $\{Q_t:t \geq 0\}$ is referred to as the \emph{ground state-transformed} or \emph{intrinsic} semigroup.

The ground state transformation is a general procedure known in probability theory as the Doob transformation \cite{bib:Doob}. It allows for the construction of diffusion semigroups from classical Schrödinger semigroups \cite{Davies,Eckmann,Simon}. For instance, the Ornstein–Uhlenbeck semigroups discussed above can be derived through the ground state-transformation of the semigroup associated with the quantum harmonic oscillator, where $H = -\Delta +\frac{1}{4}|x|^2-\frac{d}{2}$ and $\varphi_0(x) = (2\pi)^{-d/4} \exp(-|x|^2/4)$. 

The hypercontractivity and ultracontractivity properties of ground state-transformed semigroups associated with Schr\"odinger operators involving nonlocal kinetic terms and confining potentials have been extensively studied. This includes, in particular, fractional and relativistic Laplacians \cite{bib:KK,bib:KS}, cylindrical fractional Laplacian \cite{bib:Kulczycki-Sztonyk}, L\'evy operators \cite{bib:KKL2018,Kaleta-Lorinczi-AoP}, and more general L\'evy-type operators \cite{bib:ChW2015,bib:ChW2016}. We note that these contractivity properties are often viewed as properties of the original Schr\"odinger semigroups—in which case they are referred to as \emph{intrinsic hypercontractivity} and \emph{intrinsic ultracontractivity}. In the non-local setting, research in this direction dates back to the pioneering works for domains by Chen and Song \cite{Chen-Song1, Chen-Song2} and Kulczycki \cite{Kul}. See also more recent important contributions by Chen, Kim and Wang \cite{ChenKimWang}, Chen and Wang \cite{ChenWangDom}, Grzywny \cite{Grzywny}, Kim and Song \cite{KimSong} and Kwa\'snicki \cite{Kwa}. For classical results concerning local Schr\"odinger operators and the historical background, we refer the reader to the seminal paper by Davies and Simon \cite{bib:DS}, the survey by Davies, Gross, and Simon \cite{DGS}, as well as the influential work of Ba\~nuelos \cite{Ban}.

\subsection{Formulation of the problem and presentation of results} \label{sec:results} For non-local operators, it is known that, in a fairly general setting—particularly under our Assumption \eqref{A}—the following statements are equivalent:\
\begin{itemize}
\item[(a)] (\emph{hypercontractivity}) For every $1 < p < q < \infty$, there exists $t(p,q)>0$ such that for every $t \geq t(p,q)$, the operator $Q_t: L^p(\mu) \to L^q(\mu)$ is bounded (here we use a general definition, see e.g.\ \cite[p.\ 371]{DGS}; in particular, we do not require that $\left\|Q_t\right\|_{L^p(\mu) \to L^q(\mu)} \leq 1$);
\smallskip
\item[(b)] (\emph{asymptotic ultracontractivity}) There exists $t_0 >0$ such that for every $t \geq t_0$, the operator $Q_t: L^1(\mu) \to L^{\infty}(\mu)$ is bounded (cf.\ \cite[Definition 2.2]{Kaleta-Lorinczi-AoP}); 
\smallskip
\item[(c)] There exist $C, R>0$ such that $V(x) \geq C |\log \nu(x)|$, whenever $|x| \geq R$ \\
           (or, equivalently, $g(|x|) \geq C |\log f(|x|)|$, $|x| \geq R$, possibly with different $C$ and $R$);
\end{itemize}
see \cite[Corollary 3.3]{bib:KKL2018}; a different, general and powerful approach can be found in \cite{bib:ChW2015}. 

The equivalence (a) $\Leftrightarrow$ (b) is somewhat surprising, as it contrasts with the classical case of diffusion semigroups, where ultracontractivity is significantly stronger than hypercontractivity--even for large times \cite{bib:DS}. For instance, the Ornstein--Uhlenbeck semigroup is hypercontractive but not asymptotically ultracontractive \cite[Example 4.4]{bib:KKL2018}. 

This equivalence also motivates the present study:\ our main goal is to understand the regularizing effects of the ground state-transformed semigroups $\{Q_t:t \geq 0\}$ on functions $h$ from $L^1(\mu)$, in situations where the condition (c), relating the behavior of $g$ and $f$, does not hold—that is, when asymptotic ultracontractivity (and hence also hypercontractivity) fails.
For technical reasons, it is convenient to assume that the map
\begin{align} \label{eq:decr_to_zero}
	r \mapsto \frac{g(r)}{|\log(f(r))|} \ \text{is eventually decreasing and} \ \lim_{r \to \infty} \frac{g(r)}{|\log(f(r))|} = 0.
\end{align}
Of course, for a given profile $f$, there exist examples of $g$ which lie strictly between the threshold in condition (c) and the decay condition \eqref{eq:decr_to_zero}. However, such cases are typically too intricate to be of practical interest in our study; illustrative examples are provided in Examples \ref{ex:ex1}-\ref{ex:ex4}.

We are now in a position to state our main results. To formulate them precisely, we first note that the constants  $K, \widetilde K>0$ appearing in the statements originate from the estimate \eqref{eq:est_noniuc}, which will be presented below. We also introduce the notation
\begin{align} \label{def:alpha}
	\alpha_t(u):= \big(f^2\big)^{-1}\left(\frac{\kappa(t)}{u}\right), \quad u  \geq \kappa(t),
\end{align}
where $\kappa(t) >0$ is a constant that will be specified in Lemma \ref{lem:alpha}. 
	
	\begin{theorem}\label{thm:main}
	Assume \eqref{A}.
If \eqref{eq:decr_to_zero} holds, then for every $t>0$  there exists a constant $C(t)>0$ such that for every $h \in L^1(\mu)$ with $\left\|h\right\|_{L^1(\mu)} = 1$ and $u \geq \kappa(t)$ we have
  \[
	\mu\left(\left\{x \in \R^d: |Q_t h(x)| > u \right\}\right) \leq  \frac{1}{u} \cdot \frac{C(t)}{w_t\big(\alpha_t(u)\big)}, 
	\]
where 
\[
w_t(r):= g^2(r) \exp(K t g(r)),
\]
with the constant $K$ that comes from the upper bound in \eqref{eq:est_noniuc}, and $\alpha_t(u)$ is defined by \eqref{def:alpha}.
  \end{theorem}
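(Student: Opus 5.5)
We would start from the two-sided heat kernel estimate \eqref{eq:est_noniuc} for $q_t$ in the non-ultracontractive regime. We expect it to give an upper bound roughly of the form
\[
q_t(x,y) \leq \widetilde K(t)\Big(1 + \frac{e^{-K t g(|x|)}}{\varphi_0(y)\ldots} + \ldots\Big),
\]
or more precisely
\[
q_t(x,y)\le C(t)\Big(1+\frac{e^{-Ktg(|x|)}}{\varphi_0^2(x)}\,\I_{\{|x|\ge|y|\}}+\frac{e^{-Ktg(|y|)}}{\varphi_0^2(y)}\,\I_{\{|y|>|x|\}}\Big).
\]
Here $\varphi_0^2(x)\asymp f^2(|x|)/g^2(|x|)$ (the ground state estimate from \cite{KSch}). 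Thus $q_t(x,y)$ is bounded unless one variable is large, and the large part has size at most $g^2(r)e^{-Ktg(r)}/f^2(r)$, with $r=\max(|x|,|y|)$. Integrating against $|h|\,d\mu$ with $\|h\|_{L^1(\mu)}=1$ then gives a pointwise bound
\[
|Q_th(x)|\le C(t)\Big(1+ \frac{g^2(|x|)e^{-Ktg(|x|)}}{f^2(|x|)} + \int_{|y|>|x|}\frac{g^2(|y|)e^{-Ktg(|y|)}}{f^2(|y|)}|h(y)|\mu(dy)\Big).
\]
Using \eqref{eq:decr_to_zero} (eventually $g(r)\ll|\log f(r)|$), the weight $r\mapsto g^2(r)e^{-Ktg(r)}/f^2(r)$ is eventually increasing and tends to infinity. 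The last integral is therefore bounded by $\sup_{|y|\ge|x|}$ of the same expression, i.e.\ by that sum evaluated at large radii. This is the delicate point, and I return to it below.

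The key structural step is to show that $\{|Q_th|>u\}$, for $u\ge\kappa(t)$, is essentially contained in two pieces. The first is a region $\{|x|\ge \alpha_t(u)\}$, where the kernel itself can exceed $u$. There we use only the tail of $\mu$: $\mu(|x|\ge r)\lesssim \int_r^\infty f^2(s)/g^2(s)\,s^{d-1}ds$. Under \eqref{A} (doubling-type/regularity of $f$), we expect this to be comparable to $f^2(r)/g^2(r)$ times lower-order factors. At $r=\alpha_t(u)$, where $f^2(r)=\kappa(t)/u$, this yields $\frac{1}{u}\cdot\frac{C}{g^2(\alpha_t(u))}$. We would also absorb the factor $e^{-Ktg}$; this is why $\kappa(t)$ and Lemma \ref{lem:alpha} calibrate $\alpha_t$ so that the extra exponential is gained. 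The second piece lies inside the ball $\{|x|<\alpha_t(u)\}$. There $q_t(x,y)$ is large only for $|y|$ large. On this piece we split $h=h\I_{\{|y|<\alpha_t(u)\}}+h\I_{\{|y|\ge\alpha_t(u)\}}$. The first part produces $|Q_t(\cdot)|\le u/2$, by the choice of $\alpha_t(u)$ and monotonicity of the weight. The second part is controlled by Chebyshev in $L^1$ with the symmetric kernel:
\[
\mu(|Q_t(h\I_{\{|y|\ge\alpha\}})|>u/2)\le \frac{2}{u}\int_{|y|\ge\alpha}|h(y)|\int_{|x|<\alpha} q_t(x,y)\mu(dx)\,\mu(dy).
\]
The inner integral is bounded via the kernel estimate by roughly $\mu(|x|<\alpha)$ plus $e^{-Ktg(|y|)}g^2(|y|)/f^2(|y|)\cdot\mu(\ldots)$. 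Hmm—more efficiently, we would use the lower-tail form of \eqref{eq:est_noniuc}, which shows that $\int_{|x|<\alpha}q_t(x,y)\mu(dx)\lesssim e^{-Ktg(|y|)}\,g^{-2}(|y|)\cdot$const for $|y|$ large: starting far out, the process is unlikely to return to the ball before being killed.

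Combining the two pieces gives $\mu(|Q_th|>u)\le \frac{C(t)}{u}\big(g^{-2}e^{-Ktg}\big)(\alpha_t(u)) = \frac{C(t)}{u\,w_t(\alpha_t(u))}$. Here we use that $w_t$ is increasing, so $\sup_{|y|\ge\alpha}1/w_t(|y|)=1/w_t(\alpha)$.

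The main obstacle is extracting from \eqref{eq:est_noniuc} the precise bound $\int_{B(0,\alpha)}q_t(x,y)\mu(dx)\lesssim 1/w_t(|y|)$ for $|y|\ge\alpha$. In the same vein, the tail $\mu(|x|\ge\alpha_t(u))$ must carry the full factor $1/w_t(\alpha_t(u))$. This requires matching the exponent $Ktg$ from the kernel upper bound with the definition of $\kappa(t)$ in Lemma \ref{lem:alpha}, and using \eqref{eq:decr_to_zero} to handle monotonicity and the comparison of $f^2$ tails.
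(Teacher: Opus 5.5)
\textbf{There is a genuine gap: neither of your two pieces can be closed as you propose.}

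\emph{Exterior piece.} Bounding by the tail of $\mu$ is far too weak. By the lower bound in \eqref{eq:gs}, \eqref{eq:f_comp} and \eqref{A3},
\[
\mu(|x|\ge r)\ge\int_{r<|x|<r+1}\varphi_0^2(x)\,dx\gtrsim r^{d-1}\frac{f^2(r)}{g^2(r)}.
\]
Since $f^2(\alpha_t(u))=\kappa(t)/u$, the target bound at $r=\alpha_t(u)$ equals $\frac{C(t)}{\kappa(t)}\frac{f^2(r)}{g^2(r)}e^{-Ktg(r)}$. The ratio between the two is of order $r^{d-1}e^{Ktg(r)}$, which is unbounded. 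No calibration of the constant $\kappa(t)$ can ``absorb'' a factor $e^{-Ktg(\alpha_t(u))}\to0$, because $\mu$ does not depend on $t$. What has to be shown is that $\{|Q_th|>u\}$ occupies only a small part of $\{|x|\ge\alpha_t(u)\}$.

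\emph{Interior piece.} Your claimed bound $\int_{|x|<\alpha}q_t(x,y)\,\mu(dx)\lesssim 1/w_t(|y|)$ is false. After the ground state transform, $Q_t$ is Markov ($Q_t\I_{\R^d}=1$, no killing), so this integral equals $1-Q_t\I_{B_\alpha(0)^c}(y)$, which is close to $1$. The heuristic ``unlikely to return before being killed'' describes $e^{-tH}$, not $Q_t$.

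\textbf{What the paper does instead.} The interior piece is actually empty. Keep the factor $f_1(|x-y|)$ in \eqref{eq:est_noniuc}, which your $\max(|x|,|y|)$ form of the kernel bound discards. Then \eqref{eq:DJP} and \eqref{eq:DJP_point} give $q_t(x,y)\lesssim 1/f_1^2(|x|)$ uniformly in $y$. Hence $Q_t|h|(x)\le\kappa(t)/f_1^2(|x|)$ and $\{Q_t|h|\ge u\}\subset\{|x|\ge\alpha_t(u)\}$. This is Lemma \ref{lem:alpha}, and it also disposes of your ``delicate'' $y$-integral.

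Your Chebyshev/duality idea is the right tool, but it must be applied on the exterior region with all of $|h|$:
\[
\mu(|Q_th|>u)\le\frac1u\int_{|x|\ge\alpha_t(u)}Q_t|h|\,d\mu=\frac1u\int_{\R^d}|h|\,Q_t\I_{B_{\alpha_t(u)}(0)^c}\,d\mu\le\frac1u\big\|Q_t\I_{B_{\alpha_t(u)}(0)^c}\big\|_{L^\infty(\mu)}.
\]

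The missing key estimate is Lemma \ref{lem:sup_estimates}:
\[
\sup_y Q_t\I_{B_r(0)^c}(y)\le C(t)\,g^{-2}(r)e^{-Ktg(r)},
\]
uniformly in the starting point, including $|y|\ge r$. It concerns the complement of the ball, not the quantity you identified as the main obstacle. It is proved by splitting the $z$-integral in \eqref{eq:est_noniuc} into $|z|\le r_0(t)$, $r_0(t)<|z|\le r$ and $|z|>r$:
\begin{itemize}
\item On the middle range, the eventual monotonicity of $r\mapsto f_1(r)e^{Ktg(r)}$ (from \eqref{eq:decr_to_zero}) lets you replace $1/(f_1(|z|)e^{Ktg(|z|)})$ by $1/(f_1(|y|)e^{Ktg(|y|)})$.
\item On the inner range, use $\varphi_0^2(y)\lesssim \frac{f_1(r)}{g^2(r)}f_1(|y|)$ for $|y|\ge r$ together with $f_1(r)\le e^{-Ktg(r)}$.
\item On the outer range, use the monotonicity of $g$.
\end{itemize}
This uniform bound is exactly where the exponential gain $e^{-Ktg(\alpha_t(u))}$ comes from.
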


The accuracy of our upper estimate within the class of operators $L$ and the potentials $V$ described by Assumption \eqref{A} can be assessed through the following theorem. A more detailed discussion of the sharpness of these results is provided below.

\begin{theorem}\label{prop:main2}
Assume \eqref{A}. 
If \eqref{eq:decr_to_zero} holds, then  for every $t>0$  there exist constants $\widetilde C(t)>0$ and $u_0(t)>0$ such that for every $u \geq u_0(t)$ we have
  \begin{align} \label{ieq:lower_sup}
	\sup_{\genfrac{}{}{0pt}{2}{h \geq 0}{\int h d\mu = 1}} \mu\left(\left\{x \in \R^d: Q_t h(x) > u \right\}\right) \geq  \frac{1}{u} \cdot \frac{\widetilde C(t)}{\widetilde w_t\big(\beta_t(u)\big)}, 
	\end{align}
where 
\begin{align} \label{eq:tilde-w}
\widetilde w_t(r):= g^2(r) \exp(\widetilde K t g(r)),
\end{align}
with the constant $\widetilde K$ that comes from the lower bound in \eqref{eq:est_noniuc}, and 
\[
\beta_t(u) := G_t^{-1} \left(\frac{\widetilde \kappa(t)}{u}\right), \quad u \geq u_0(t),
\]
where $G_t(r)= f^2(r) \exp(\widetilde K t g(r))$ and $\widetilde \kappa(t) >0$ is a constant.
\end{theorem}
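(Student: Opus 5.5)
We prove Theorem \ref{prop:main2} by testing the supremum on normalized indicators of small balls. The estimate \eqref{eq:est_noniuc} (from \cite{KSch}) gives two-sided bounds for $u_t(x,y)$ and $\varphi_0$ under \eqref{A}. We expect its lower part to read, for $|x|$ large and $y$ in a fixed ball $B_0=B(0,1)$,
\[
u_t(x,y) \geq c(t)\, \frac{f(|x|)}{g(|x|)}\,e^{-\widetilde K t g(|x|)}\cdots,
\qquad
\varphi_0(x) \asymp \frac{f(|x|)}{g(|x|)}.
\]
This would give $q_t(x,y) \gtrsim \varphi_0(x)^{-1}\, u_t(x,y)$ for $y\in B_0$. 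The exact form of this kernel lower bound is our working assumption about \eqref{eq:est_noniuc}.

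Take $h=\I_{B_0}/\mu(B_0)$, which is admissible with $\int h\,d\mu=1$. For $|x|\geq R$ we have
\[
Q_t h(x)=\frac{1}{\mu(B_0)}\int_{B_0} q_t(x,y)\,\mu(dy) \geq c_1(t)\, \frac{g(|x|)}{f(|x|)}\cdot\frac{f(|x|)}{g(|x|)}\, e^{-\widetilde K t g(|x|)}\,\cdot(\text{correction}).
\]
Combining the ground state asymptotics with the kernel bound, we expect this to produce
\[
Q_t h(x)\geq \frac{\widetilde\kappa(t)}{f^2(|x|)\exp(\widetilde K t g(|x|))}=\frac{\widetilde\kappa(t)}{G_t(|x|)},
\]
that is, of order $1/\varphi_0(x)^2$ damped by the exponential factor. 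This is exactly how $G_t=f^2e^{\widetilde K t g}$ enters.

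Next we use monotonicity. By \eqref{eq:decr_to_zero}, $g=o(|\log f|)$, so $G_t$ is eventually strictly decreasing to $0$ and $\beta_t(u)=G_t^{-1}(\widetilde\kappa(t)/u)$ is well defined and increasing for $u\geq u_0(t)$. Hence $Q_t h(x)>u$ whenever $|x|<\beta_t(u)$ with $|x|\geq R$. Choosing $u_0(t)$ so that $\beta_t(u)\geq 2R$, and enlarging the constant $\widetilde\kappa$ slightly to get a strict inequality, we obtain
\[
\mu\big(\{Q_th>u\}\big)\geq \mu\big(\{\beta_t(u)/2\le |x|<\beta_t(u)\}\big)\ \text{ or }\ \geq\mu\big(\{|x|\geq \beta_t(u)\}\big)^{\complement}\text{-type piece}.
\]
It is better to use the shell just inside $\beta_t(u)$: the complement of the ball carries little mass, and we want a lower bound comparable to the tail.

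It remains to evaluate the shell mass. Using $\varphi_0^2(x)\asymp f^2(|x|)/g^2(|x|)$, the definition of $\beta=\beta_t(u)$ gives $f^2(\beta)=\widetilde\kappa/(u\,e^{\widetilde K t g(\beta)})$. So near $|x|=\beta$,
\[
\varphi_0^2(x)\gtrsim \frac{f^2(\beta)}{g^2(\beta)} = \frac{\widetilde\kappa(t)}{u\, g^2(\beta)e^{\widetilde Kt g(\beta)}}=\frac{\widetilde\kappa(t)}{u\,\widetilde w_t(\beta)}.
\]
Integrating over the shell $\{\beta-1\le|x|<\beta\}$ gives volume at least $1$. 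The doubling and regularity properties of $f$ and $g$ in \eqref{A} let us replace $f(|x|)$ and $g(|x|)$ by $f(\beta)$ and $g(\beta)$ up to constants, and $e^{\widetilde K t g}$ changes by a bounded factor on a unit shell. This yields \eqref{ieq:lower_sup}.

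The main obstacle is the pointwise lower bound $Q_th(x)\gtrsim 1/G_t(|x|)$. It requires pinning down precisely which form of the lower estimate in \eqref{eq:est_noniuc} is available for $x$ far away and $y$ in a compact set, including the exponent $\widetilde K t g(|x|)$ and the ground state profile. It also requires checking that the regularity in \eqref{A} allows the comparisons on unit shells. If only a ratio bound $u_t(x,y)\ge c\,e^{-\widetilde K t g(|x|)}\varphi_0(x)\varphi_0(y)/\varphi_0(x)^2$ is given, the same computation applies verbatim.
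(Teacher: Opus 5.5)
\textbf{The central step is false, and the choice of test function cannot be repaired.} Your $h=\I_{B_0}/\mu(B_0)$ is bounded, and $Q_t$ is Markov: $q_t\ge 0$ and $\int q_t(x,y)\,\mu(dy)=1$ by the eigenequation. Hence $Q_th\le \norm{h}_{L^\infty(\mu)}=1/\mu(B_0)$ everywhere. So $\{Q_th>u\}=\emptyset$ for every $u>1/\mu(B_0)$, and $Q_th(x)\gtrsim 1/G_t(|x|)\to\infty$ cannot hold.

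The actual estimate \eqref{eq:est_noniuc} shows the same thing directly. $\Gamma(\tau,x,y)$ carries the factor $\I_{\{|x|,|y|>\rho\}}$, so for $|y|\le\rho$ it gives $C^{-1}\le q_t(x,y)\le C$, i.e.\ $Q_th\asymp 1$. Your guessed kernel bound actually produces $\frac{g}{f}\cdot\frac{f}{g}\,e^{-\widetilde K t g(|x|)}\to 0$. The ``correction'' $1/f^2(|x|)$ has no source. The fallback ratio bound you mention would contradict the upper half of \eqref{eq:est_noniuc}.

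Even granting the bound, the direction is wrong. Since $G_t$ is eventually decreasing, $\widetilde\kappa(t)/G_t(|x|)>u$ holds for large $|x|$ exactly when $|x|>\beta_t(u)$, not on the shell inside $\beta_t(u)$ that you integrate over. A $u$-independent test function can only work if it is unbounded at infinity, which is the mechanism behind Theorem~\ref{prop:main3}.

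\textbf{The missing idea:} the test function must depend on $u$ and put its unit mass near a far-away point, where $\mu$ is small and hence $h$ is large. Take $y$ with $|y|=\beta_t(u)$. The paper uses $h=q_{\varepsilon t}(\cdot,y)$, so that $Q_th=q_{(1+\varepsilon)t}(\cdot,y)$ by the semigroup property. It then chooses $\varepsilon=1/(\widetilde K g(|y|))$, so the enlarged exponent $(1+\varepsilon)\widetilde K t g(|y|)$ costs only a factor $e^{-t}$. The normalized indicator $\I_{B_1(y)}/\mu(B_1(y))$ would also do, since then $Q_th(x)\ge\inf_{y'\in B_1(y)}q_t(x,y')$.

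Large values of $q_t$ come from $\Gamma$ when both points are far out and close to each other. For $x\in B_1(y)\cap B_{|y|}(0)^c$, restrict the $z$-integral to $B_{1/2}(\widetilde y)$ with $\widetilde y=\frac{|y|-1/2}{|y|}\,y$. There $|z|<|y|$, so $e^{-\widetilde K t g(|z|)}\ge e^{-\widetilde K t g(|y|)}$ with the \emph{same} constant $\widetilde K$; passing through \eqref{A3} instead would replace it by $C_6\widetilde K$. Also $f_1(|x-z|)f_1(|z-y|)\ge f_1(1)f_1(2)$. This gives $Q_th(x)\gtrsim f_1(|y|)^{-2}e^{-\widetilde K t g(|y|)}$, a constant multiple of $1/G_t(\beta_t(u))$, hence $\ge u$ for a suitable $\widetilde\kappa(t)$.

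The superlevel set then contains a ball of radius $1/2$ adjacent to $y$, not a shell. By \eqref{eq:gs}, \eqref{eq:f_comp} and \eqref{A3}, its $\mu$-mass is $\gtrsim f_1^2(|y|)/g^2(|y|)=\widetilde\kappa(t)/\bigl(u\,\widetilde w_t(\beta_t(u))\bigr)$. This final mass computation is the part of your sketch that survives.
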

\medskip
\noindent
It is important to note that both rate functions, $w_t\big(\alpha_t(u)\big)$ and $\widetilde w_t\big(\beta_t(u)\big)$, grow more slowly than any power function, i.e.,\ for every $\delta>0$ and $ t > 0$, we have
\begin{align}\label{eq:slow_rate_intro}
\lim_{u \to \infty} \frac{w_t\big(\alpha_t(u)\big)}{u^{\delta}} = \lim_{u \to \infty} \frac{\widetilde w_t\big(\beta_t(u)\big)}{u^{\delta}} = 0;
\end{align}
see Lemma \ref{lemma:lem2}. In this connection, we also note that the main difference between the rates $w_t\big(\alpha_t(u)\big)$ and $\widetilde w_t\big(\beta_t(u)\big)$ lies in the constants $K$ and $\widetilde K$.
Indeed, although the functions $\alpha_t$ and $\beta_t$ differ, their compositions with the profile $g$ -- namely $g(\alpha_t(u))$ and $g(\beta_t(u))$ -- are typically asymptotically equivalent:\
\begin{align} \label{eq:res_rate}
\lim_{u \to \infty} \frac{g(\alpha_t(u))}{g(\beta_t(u))} = 1,
\end{align} 
see Lemma \ref{lemma:lem2} for formal statement and illustration in Examples \ref{ex:ex1}-\ref{ex:ex4}. In particular, the specific values of the constants $\kappa(t)$ and $\widetilde \kappa(t)$ do not affect the asymptotic behavior of $g(\alpha_t(u))$ and $g(\beta_t(u))$. 

For potentials $V$ satisfying \eqref{eq:decr_to_zero} and growing sufficiently fast at infinity, Theorem \ref{thm:main} yields an integrable upper bound. In this case, it provides an upper estimate for the range $Q_t(L^1(\mu))$, see Corollary \ref{cor:cor1} (a) below; however, the sharpness of this inclusion remains unclear  as Theorem \ref{prop:main2} is insufficient to verify it.  Our final result constructs a function $h \in L^1(\mu)$ and establishes a lower bound for $\mu\left(\left\{x \in \R^d: Q_t h(x) > u \right\}\right)$, yielding additional information about the range of $Q_t(L^1(\mu))$.
 
\begin{theorem}\label{prop:main3} 
Assume \eqref{A} and \eqref{eq:decr_to_zero}. Let $\eta, \sigma:[0,\infty) \to (0,\infty)$ be increasing and continuous functions such that $1/\eta \in L^1((1,\infty),dx)$, the map
\begin{align} \label{eq:eta_decr_to_zero}
	r \mapsto  \frac{(d-1)\log r + \log \eta(r)}{|\log f(r)|} \ \text{is eventually decreasing and} \ \lim_{r \to \infty} \frac{(d-1)\log r + \log \eta(r)}{|\log f(r)|} < 2,
\end{align}
$\sigma(r) < r$, for $r \geq 1$, and 
\begin{equation} \label{theta-bling}
	\sup_{r \geq 1} \frac{f(\sigma(r))}{f(r)} < \infty.
\end{equation}
Then the function $h:\R^d \to (0,\infty)$, defined by 
\[h(x):= \frac{g^2(|x|)}{\eta(|x|)|x|^{d-1}  f_1^2(|x|)},\] 
belongs to $L^1(\mu)$ and  for every $t>0$  there exist constant $\widetilde C(t)>0$ and $u_0(t)>0$ such that for every $u \geq u_0(t)$ we have
  \begin{align} \label{ieq:lower_h}
	\mu\left(\left\{x \in \R^d: Q_t h(x) > u \right\}\right) \geq  \frac{1}{u} \cdot \frac{\widetilde C(t) }{v_t\big(\gamma_t(u)\big) },
	\end{align}
	where 
    \[
v_t(r):= \widetilde w_t(r) \frac{\eta\big(r\big)}{r - \sigma(r)} 
       = g^2(r) \exp\big(\widetilde K t g(r)+\log \eta(r) - \log(r - \sigma(r) )\big)
    \]
and 
	\[
  \gamma_t(u) := H_t^{-1} \left(\frac{\widetilde \kappa(t)}{u}\right), \quad u \geq u_0(t),
  \]
  where $H_t(r) = f^2(r) r^{d-1}\eta(r) \exp(\widetilde K t g(r))$, with the constant $\widetilde K$ coming from the lower bound in \eqref{eq:est_noniuc}, and $\widetilde \kappa(t) >0$ is a constant.

\end{theorem}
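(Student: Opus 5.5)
The plan has three steps: prove $h\in L^1(\mu)$ from the two-sided size of $\varphi_0$, bound $Q_t h$ from below pointwise at large $|x|$ via the lower bound in \eqref{eq:est_noniuc}, and then bound the $\mu$-measure of a suitable annulus on which $Q_t h>u$.

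\emph{Integrability.} I will use the two-sided estimate of the ground state coming from \cite{KSch} (part of the input behind \eqref{eq:est_noniuc}). I assume it takes the form $\varphi_0(x)\asymp f_1(|x|)/g(|x|)$ for $|x|\geq R_0$, where $f_1$ is the auxiliary profile from Assumption \eqref{A}. Then for $|x|\geq R_0$
\[
h(x)\,\varphi_0^2(x)\asymp \frac{1}{\eta(|x|)\,|x|^{d-1}}.
\]
Passing to polar coordinates, $\int_{|x|\geq R_0} h\,d\mu\asymp\int_{R_0}^\infty \eta(r)^{-1}\,dr<\infty$, because $1/\eta\in L^1((1,\infty),dx)$. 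On the ball $\{|x|<R_0\}$ all functions involved are bounded, so $h\in L^1(\mu)$.

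\emph{Pointwise lower bound for $Q_th$.} I will use the lower bound in \eqref{eq:est_noniuc} only for pairs $y$ close to $x$, say $|y-x|\leq 1$, with $|x|$ large. I expect it to give
\[
q_t(x,y)\geq c(t)\,\frac{e^{-\widetilde K t g(|x|)}}{\varphi_0(x)\varphi_0(y)} \quad\text{(up to a bounded factor of type } f(|x-y|)\text{)}.
\]
Substituting this and comparing $f_1$, $g$, $\eta$ on unit balls (the profiles are doubling-type under \eqref{A}), I aim for
\[
Q_t h(x)\;\geq\; c(t)\,e^{-\widetilde K t g(|x|)}\,\frac{1}{\varphi_0(x)}\int_{|y-x|\le 1}\varphi_0(y)h(y)\,dy\;\gtrsim\; e^{-\widetilde K tg(|x|)}h(x)\;\asymp\;\frac{g^2(|x|)}{H_t(|x|)}\;\ge\;\frac{c}{H_t(|x|)}.
\]
Assumption \eqref{eq:eta_decr_to_zero}, together with \eqref{eq:decr_to_zero}, shows that $\log H_t(r)=-|\log f(r)|\,\bigl(2-o(1)-\text{(a quantity }<2)\bigr)$ tends to $-\infty$ and is eventually monotone. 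Hence $H_t$ is eventually strictly decreasing, so $\gamma_t(u)$ is well defined for large $u$ and $\gamma_t(u)\to\infty$. Choosing $\widetilde\kappa(t)$ small enough relative to $c$, I obtain $Q_th(x)>u$ whenever $|x|\geq \gamma_t(u)$.

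\emph{Measure estimate.} Set $r=\gamma_t(u)$ and let $\rho>r$ satisfy $\sigma(\rho)=r$; if $\sigma$ is not onto, I take $\rho$ with $\sigma(\rho)\le r$ and argue by monotonicity. Then
\[
\mu(\{Q_th>u\})\geq \mu\bigl(\{r\leq|x|\leq\rho\}\bigr)\gtrsim(\rho-\sigma(\rho))\,\rho^{d-1}\,\frac{f_1^2(\rho)}{g^2(\rho)}.
\]
Condition \eqref{theta-bling} lets me replace $f(\rho)$ by $f(\sigma(\rho))\ge f(r)$ up to constants, i.e.\ compare the $f$-values at the two ends of the annulus. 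Using $H_t(r)=\widetilde\kappa(t)/u$, i.e.\ $f^2(r)r^{d-1}=\widetilde\kappa(t)\big/\bigl(u\,\eta(r)e^{\widetilde K t g(r)}\bigr)$, the right-hand side becomes
\[
\frac{\widetilde\kappa(t)}{u}\cdot\frac{r-\sigma(r)}{g^2(r)\,\eta(r)\,e^{\widetilde Ktg(r)}}=\frac{\widetilde\kappa(t)}{u\,v_t(\gamma_t(u))},
\]
which is \eqref{ieq:lower_h}.

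\emph{Main obstacle.} The delicate point is this last bookkeeping. Moving the evaluation point between $r$, $\sigma(\rho)$ and $\rho$ inside $g$, $\eta$, $e^{\widetilde Ktg}$ and $r-\sigma(r)$ must only cost constants. For $f$ this is exactly what \eqref{theta-bling} provides. For $g$ and $\eta$ I would try to put each of them in the favourable direction using monotonicity, and choose the annulus so that its width is comparable to $r-\sigma(r)$. A second point to check is that the form of the lower bound in \eqref{eq:est_noniuc} genuinely covers the near-diagonal region uniformly in $|x|$.
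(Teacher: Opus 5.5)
Your proposal has a genuine gap in the measure estimate, at exactly the bookkeeping you flag.

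\textbf{What is fine.} Your pointwise conclusion is correct, although the intermediate kernel bound is not. Restrict $z$ to a ball of radius $1/2$ inside $\{|z-x|<1,\ |z|<|x|\}$, and restrict $y$ to $\{|y-x|<1,\ |y|<|x|\}$. The latter restriction is needed because $\eta$ has no doubling property. With these choices, the lower bound in \eqref{eq:est_noniuc} gives $q_t(x,y)\gtrsim e^{-\widetilde Ktg(|x|)}/(f_1(|x|)f_1(|y|))$. This is smaller than your $e^{-\widetilde Ktg(|x|)}/(\varphi_0(x)\varphi_0(y))$ by the unbounded factor $g(|x|)g(|y|)$, so $Q_th(x)\gtrsim e^{-\widetilde Ktg(|x|)}h(x)$ is false. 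It does no harm, because you discard the $g^2$: one gets $Q_th(x)\gtrsim 1/H_t(|x|)$ directly. Since $H_t$ is eventually decreasing, $Q_th>u$ on $\{|x|\ge\gamma_t(u)\}$, as you say.

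\textbf{The gap.} On your exterior annulus $\{r\le|x|\le\rho\}$ with $\sigma(\rho)=r$, you silently evaluate at $r$ two quantities that live at $\rho$.
\begin{itemize}
\item \emph{The profile $g$.} Monotonicity goes the wrong way there, since $g(|x|)\ge g(r)$. This can be repaired, but you did not supply the repair. By the eventual monotonicity in \eqref{eq:decr_to_zero}, $g(\rho)\le g(r)\,|\log f(\rho)|/|\log f(r)|$. By \eqref{theta-bling}, $|\log f(\rho)|\le|\log f(r)|+\log c$.
\item \emph{The width.} This cannot be repaired within your construction. Your annulus has width $\rho-\sigma(\rho)$, and nothing makes $s\mapsto s-\sigma(s)$ monotone, or comparable at $r$ and at $\sigma^{-1}(r)$. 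For $f(r)=r^{-d-2a}$, every increasing continuous $\sigma$ with $r/2\le\sigma(r)<r$ satisfies \eqref{theta-bling}. So at any prescribed large $r$ one may have $\sigma(r)=r/2$ and $\sigma(r+1)=r$. Then your width is $1$, while $v_t$ requires $r/2$.
\end{itemize}
The structural reason is that \eqref{theta-bling} is a backward condition. It makes $f$ flat up to a constant on $[\sigma(r),r]$, below $r$. It says nothing about how long $f$ stays comparable to $f(r)$ above $r$. So there is no reason for $\{|x|\ge\gamma_t(u)\}$ to carry $\mu$-mass of order $f_1^2(r)r^{d-1}(r-\sigma(r))/g^2(r)$.

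\textbf{The missing idea.} Place the annulus below $r=\gamma_t(u)$, as the paper does: take $\{\sigma(r)<|x|<r\}$.
\begin{itemize}
\item \emph{Pointwise bound.} Your bound $Q_th(x)\gtrsim 1/H_t(|x|)$ still gives $Q_th>u$ there, after shrinking $\widetilde\kappa(t)$. Indeed, $f_1(|x|)\le f_1(\sigma(r))\le c\,f_1(r)$ by \eqref{theta-bling}, while $|x|^{d-1}$, $\eta(|x|)$ and $e^{\widetilde Ktg(|x|)}$ are at most their values at $r$. Hence $H_t(|x|)\le c^2H_t(r)$. So \eqref{theta-bling} is spent on the pointwise bound.
\item \emph{Measure estimate.} This is now pure monotonicity: $f_1(|x|)\ge f_1(r)$, $g(|x|)\le g(r)$, and $\int_{\sigma(r)}^r s^{d-1}\,ds\ge r^{d-1}(r-\sigma(r))/d$. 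Together with $H_t(r)=\widetilde\kappa(t)/u$, this gives exactly $\widetilde\kappa(t)/(u\,v_t(\gamma_t(u)))$, with the correct width $r-\sigma(r)$.
\end{itemize}

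A minor point in your integrability step: $h$ is not bounded near $0$ when $d\ge2$. However, $h\varphi_0^2\lesssim|x|^{1-d}$ there, which is locally integrable, so the conclusion stands.
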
	

\medskip
\noindent
The statement of the above theorem is necessarily technical. It assumes the existence of two auxiliary functions $\eta$ and $\sigma$ that satisfy \eqref{eq:eta_decr_to_zero} and \eqref{theta-bling}. Although such functions always exist, the key difficulty is to choose them so that the lower bound \eqref{ieq:lower_h} is as close as possible to that in Theorem \ref{thm:main}. For instance, by \eqref{eq:f_comp} one may always take $\sigma(r) = r-1$, which improves to $\sigma(r) = \frac{r}{2}$ when $f$ has the doubling property. Admissible choices for $\eta$ include $\eta(r) \approx r \log^2r$ or $\eta(r) \approx r \log r \log^2 \log r$, etc., depending on the growth of the potential. Section \ref{sec:ex} illustrates this for two extremal cases, where the profile $f$ decays polynomially and exponentially at infinity. A variant of \eqref{eq:slow_rate_intro}, \eqref{eq:res_rate} for $v_t\big(\gamma_t(u)\big)$ and its exponent will be provided in Lemma \ref{lemma:lem3}.

The estimates in Theorems~\ref{thm:main} and~\ref{prop:main3} allow us to describe the range $Q_t(L^1(\mu))$ in terms of Orlicz spaces, provided that the potential $V$ grows sufficiently fast at infinity. Recall that a function $\Phi: \R \to [0,\infty)$ is a \emph{Young function} if it is convex, even, and satisfies $\Phi(0) = 0$ and $\lim_{u \to \infty}\Phi(u) = \infty$, see \cite{Rao-Ren?}. The corresponding \emph{Orlicz space} is defined as
\[
\mL^{\Phi}(\mu) = \left\{h:\R^d \to \R \ \ \text{measurable and} \ \int_{\R^d} \Phi(|h(x)|/\lambda) \mu(dx) < \infty \ \ \text{for some} \ \lambda>0 \right\};
\]
$\mL^{\Phi}(\mu)$ is a linear space, and the norm
\[
\left\|h\right\|_{\mL^{\Phi}(\mu)} = \inf \left\{\lambda>0: \int_{\R^d} \Phi(|h(x)|/\lambda) \mu(dx)  \leq 1 \right\},
\]
known as the \emph{Luxemburg norm}, endows $\mL^{\Phi}(\mu)$ with the structure of a Banach space. In particular, if $\Phi(u)=|u|^q$ with $q \in [1,\infty)$, then $\mL^{\Phi}(\mu) = L^q(\mu)$.

\begin{corollary}\label{cor:cor1}
Assume \eqref{A} and \eqref{eq:decr_to_zero}. Let $\Phi$ be a Young function and let $ t >0$. The following assertions hold. 
\begin{itemize}
\item[(a)] If there is $\lambda>0$ such that
\[
\int_{\frac{\kappa(t)}{\lambda}}^{\infty}  \frac{1}{u}\frac{\Phi^{\prime} (u)}{w_t(\alpha_t(\lambda u))} du < \infty,
\]
then $Q_t$ maps $L^1(\mu)$ continuously into $\mL^{\Phi}(\mu)$.

\item[(b)] Let $\eta, \sigma:[0,\infty) \to (0,\infty)$ be increasing and continuous functions such that $1/\eta \in L^1((1,\infty),dx)$, $\sigma(r) < r$, for $r \geq 1$, and \eqref{eq:eta_decr_to_zero}, \eqref{theta-bling} hold. If for every $\lambda>0$,
\[
\int_{\frac{u_0(t)}{\lambda}}^{\infty}  \frac{1}{u}\frac{\Phi^{\prime} (u)}{v_t(\gamma_t(\lambda u))} du = \infty,
\]
then $Q_t \big(L^1(\mu)\big) \not \subset \mL^{\Phi}(\mu)$. 
\end{itemize}
\end{corollary}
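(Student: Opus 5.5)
The plan is to reduce both parts to the layer-cake representation of the Orlicz modular. For a nonnegative measurable $F$ and a Young function $\Phi$ with $\Phi(0)=0$ (absolutely continuous, being convex), Fubini gives
\[
\int_{\R^d} \Phi\big(F(x)/\lambda\big)\, \mu(dx) = \int_0^\infty \Phi'(u)\, \mu\big(\{x: F(x) > \lambda u\}\big)\, du .
\]
We then insert the distributional bounds of Theorem \ref{thm:main} (for (a)) and Theorem \ref{prop:main3} (for (b)).

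\textbf{Part (a).} Fix $t>0$, take $h\in L^1(\mu)$ with $\|h\|_{L^1(\mu)}=1$, and apply the identity to $F=|Q_t h|$. Split the $u$-integral at $u=\kappa(t)/\lambda$. On $[0,\kappa(t)/\lambda]$ we bound the distribution function by $1$, which contributes at most $\Phi(\kappa(t)/\lambda)$. On the remaining range we use Theorem \ref{thm:main} with level $\lambda u \ge \kappa(t)$, which gives
\[
\mu\big(|Q_th|>\lambda u\big) \le \frac{C(t)}{\lambda u\, w_t(\alpha_t(\lambda u))}.
\]
The resulting integral is exactly the one assumed finite, up to the factor $C(t)/\lambda$. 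Hence $\int \Phi(|Q_th|/\lambda)\,d\mu \le M$ for a constant $M=M(t,\lambda,\Phi)$ independent of $h$.

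To obtain a norm bound, use convexity with $\Phi(0)=0$: for $s\ge 1$ we have $\Phi(v/s)\le \Phi(v)/s$. Taking $\lambda' = \lambda\max(1,M)$ gives modular $\le 1$, so $\|Q_th\|_{\mL^\Phi}\le \lambda\max(1,M)$. For general $h\in L^1(\mu)$, linearity and homogeneity give $\|Q_t h\|_{\mL^\Phi}\le \lambda\max(1,M)\,\|h\|_{L^1(\mu)}$; the case $h=0$ is trivial. This yields both the inclusion and the continuity.

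\textbf{Part (b).} Let $h$ be the function from Theorem \ref{prop:main3}, which lies in $L^1(\mu)$. Suppose, for contradiction, that $Q_th\in\mL^\Phi(\mu)$. Then there is $\lambda>0$ with $\int \Phi(Q_th/\lambda)\,d\mu<\infty$; note that $Q_th>0$ since the kernel is positive. By the layer-cake identity, restricted to $u\ge u_0(t)/\lambda$, and the lower bound \eqref{ieq:lower_h} at level $\lambda u\ge u_0(t)$,
\[
\infty > \int_{u_0(t)/\lambda}^\infty \Phi'(u)\,\mu\big(Q_th>\lambda u\big)\,du \ge \frac{\widetilde C(t)}{\lambda}\int_{u_0(t)/\lambda}^\infty \frac{1}{u}\frac{\Phi'(u)}{v_t(\gamma_t(\lambda u))}\,du .
\]
The right-hand side is infinite by hypothesis, which is a contradiction. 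Hence $Q_t(L^1(\mu))\not\subset \mL^\Phi(\mu)$.

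\textbf{Main obstacle.} The only delicate point is the justification of the layer-cake formula for a general Young function. I would write $\Phi(s)=\int_0^s\Phi'(u)\,du$ with $\Phi'$ the right derivative, which is nonnegative on $[0,\infty)$ because $\Phi$ is even and convex. Then Tonelli applies to nonnegative integrands, so no integrability is needed in advance, and both directions of the argument are legitimate. The remaining step, the rescaling to get a Luxemburg-norm bound in (a), is routine by convexity.
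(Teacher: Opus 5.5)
Your proposal is correct and follows the paper's proof: the layer-cake identity $\int \Phi(F/\lambda)\,d\mu=\int_0^\infty \Phi'(u)\,\mu(\{F>\lambda u\})\,du$ combined with the tail bounds of Theorems \ref{thm:main} and \ref{prop:main3}. You also spell out two steps the paper leaves implicit: the split at $\kappa(t)/\lambda$ using $\mu(\R^d)=1$, and the passage from a uniform modular bound to a Luxemburg-norm bound via convexity.
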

We refer the reader to Section~\ref{sec:ex} for concrete examples of Orlicz spaces arising in the context of fractional and relativistic Schr\"odinger operators with various confining potentials.


\subsection{Dependence on parameters and sharpness of the results} \label{sec:sharp} One of the main concerns in the study of the classical Ornstein--Uhlenbeck semigroup was whether the constant could be chosen independently of the dimension $d$. We emphasize that our setting is fundamentally different, as we do not focus on a specific semigroup. Instead, we allow the L\'evy measure $\nu$ (or the L\'evy operator $L$) and the confining potential $V$ to vary within a fairly broad class. The main objective of this paper is to understand how the functions $w_t$ and $\alpha_t$ (or $\widetilde w_t$ and $\beta_t$) depend on the decay or growth properties of $\nu$ and $V$. In particular, we allow the constants to depend on $\nu$, $V$; they may also implicitly depend on the dimension $d$ through $\nu$. For simplicity, this dependence is suppressed in the notation. At the level of generality considered here, achieving independence of these constants from the initial data appears to be out of reach.
	
We note that even for very specific choices of $L$ and $V$, there are no explicit formulas for (the heat kernels of) the Schrödinger semigroups $\{e^{-tH}:t \geq 0\}$ or the corresponding intrinsic semigroups $\{Q_t:t \geq 0\}$. In this respect, our present setting also differs essentially from the classical Ornstein--Uhlenbeck case. As a consequence, we have to rely on the approximate results available in the literature. Our findings rely on two-sided heat kernel estimates, established only recently in \cite[Theorem 1.1]{KSch} (see also Theorem 4.6 therein). Combined with  the ground state estimates \eqref{eq:gs}, this result implies that for every $T>0$ there exist $\rho>1$, a constant $C=C(T)>0$, and constants $K, \widetilde K>0$ (independent of $T$)  such that for every $x,y \in \R^d$ and all $t \geq T$, we have
  \begin{align}\label{eq:est_noniuc}
	C^{-1} \max\left\{1, e^{\lambda_0 t}\Gamma(\widetilde K t, x, y)\right\} \leq q_t(x,y) \leq C \max\left\{1, e^{\lambda_0 t}\Gamma(K t, x, y)\right\},
  \end{align}
  where
	\begin{align}\label{eq:F_def}
		\Gamma (\tau, x, y) := \frac{\I_{\left\{|x|, |y| > \rho \right\}}}{f_1(|x|) f_1(|y|)} \int_{\rho-1 < |z| < |x| \vee |y|} f_1(|x-z|)f_1(|z-y|) \frac{dz}{\exp(\tau g(|z|))}.
	\end{align} 
We note that the original statement of \cite[Theorem 1.1]{KSch} uses $T=30t_b$, with $t_b>0$ from Assumption \eqref{A}. Here, we allow any $t_b>0$ in Assumption \eqref{A}, so the estimates \eqref{eq:est_noniuc} hold for all $t \geq T$, for any fixed $T>0$. 

 We have made a considerable effort to find an argument strong enough to preserve the same constants $K$, $\widetilde K$ in exponential terms. In this sense, our results in Theorems \ref{thm:main} and \ref{prop:main2} are just as sharp as the estimates in \eqref{eq:est_noniuc}, without loss of information. This is reflected in our proofs:\ we show that if \eqref{eq:est_noniuc} holds for some  constants $K, \widetilde K>0$, then our results remain valid for the same values of $K$ and $\widetilde K$. Although the constants $K, \widetilde K$ found in \cite{KSch} are not optimal (in our notation $K = \widetilde K^{-1} = (4C_2 C_6^2)^{-1}$, where $C_2$ and $C_6$ are from Assumption \eqref{A}), we believe that \eqref{eq:est_noniuc} still captures the essential behavior of the semigroups under consideration. These constants could potentially be improved in future work. 
 
\subsection{Notation} For $x\in \R^d$ and $r>0$, we denote $B_r(x) := \{z\in\R^d : |z-x| <r\}$. We set $L^p(\mu) := L^p(\R^d,\mu)$ and write $L^p(dx)$ when $\mu$ is the Lebesgue measure. We write $a \wedge b:= \min\left\{a,b\right\}$, $a \vee b:= \max\left\{a,b\right\}$, and, for a real-valued function $h$, we define $h_1:=h \wedge 1$. In addition, $\alpha(u) \approx \beta(u)$ means that $\lim_{u \to \infty}\alpha(u)/\beta(u) = 1$. The inverse of a function $f$ is denoted by $f^{-1}$.

	\section{Assumption \eqref{A} and its consequences}\label{sec:Assumption and notation}

We now introduce the main assumption, which concerns the regularity of the kinetic term operator $-L$, its associated convolution semigroup $\{P_t:t \geq 0\}$, and the confining potential $V$. Since the L\'evy density $\nu(x)$ exists and condition \eqref{eq:infinite_nu} holds, the operators $P_t$, $t>0$, are integral operators \cite[Theorem 27.7]{bib:Sat}. That is, for every $t > 0$, there exists a positive density $p_t(\cdot)$ such that 
\[
P_t f(x) = \int_{\R^d} p_t(y-x) f(y) dx.
\]
Further information on the existence and regularity of the (transition) densities for convolution semigroups can be found in \cite{KSch}.
\medskip

\noindent
\textbf{Assumption (\namedlabel{A}{A})} \\
The L\'evy density $\nu(x)$ and the confining potential $V(x)$ have \emph{radial profiles}:\ there exist a strictly decreasing, continuous function $f: (0,\infty) \to (0,\infty)$, a strictly increasing, continuous function $g: [0,\infty) \to (0,\infty)$, and constants $C_1, C_2 \geq 1$ such that
\[
  C_1^{-1} f(|x|) \leq \nu(x) \leq C_1 f(|x|), \quad x\in\R^d \setminus \{0\},
\]
and 
\[
  C_2^{-1} g(|x|) \leq V(x) \leq C_2 g(|x|), \quad |x| \geq R_0,
\]
for some $R_0>0$. 
Moreover, the following conditions hold. 
	\begin{itemize}\itemsep=5pt
\item[\bfseries(\namedlabel{A1}{A1})] 
	There exists a constant $C_3 > 0$ such that
		\begin{equation*}
			\int_{\genfrac{}{}{0pt}{2}{|x-y|>1}{|y|>1}} f(|x-y|)f(|y|) dy \leq C_3 f(|x|), \quad |x| \geq 1;
		\end{equation*}  
		
\item[\bfseries(\namedlabel{A2}{A2})] 
	The function $(t,x) \mapsto p_t(x)$ is continuous on $(0,\infty) \times \R^d$ and  for every $t_b>0$ there exist constants  $C_4, C_5 >0$ such that
		\begin{equation*}
			p_t(x) \leq C_4 ( [e^{C_5t} f(|x|)]\wedge 1), \quad x\in\R^d\setminus \{0\}, \ t \geq t_b,
		\end{equation*}
		and, for every $r \in (0,1]$,
		\begin{equation*}
			\sup\limits_{t \in (0,t_b]} 	\sup\limits_{r \leq |x| \leq 2} p_t(x)< \infty;
		\end{equation*}

\item[\bfseries(\namedlabel{A3}{A3})] 
There exists a constant $C_6 \geq 1$ such that $g(r+1) \leq C_6 g(r)$ for $ r \geq R_0$. 
\end{itemize}

\bigskip
	
Assumption \eqref{A} provides a regularity framework under which the estimates in \eqref{eq:est_noniuc}, established in \cite[Theorem 1.1]{KSch}, can be applied. These conditions have been extensively discussed in the cited work; rather than repeating that discussion here, we will only mention that \eqref{A1}-\eqref{A2} define a very broad class of L\'evy operators/processes. This class includes those with power-law decaying L\'evy measures, such as the fractional Laplacian/isotropic stable process, as well as those with exponentially decaying L\'evy measures, such as the relativistic stable operator/process, and also their mixtures with the standard Laplacian/Brownian motion. Examples are discussed in Section~\ref{sec:ex}. Condition \eqref{A3} characterizes potentials with at most exponential growth at infinity. For example, $V(x)=\log(1+|x|)^{\gamma}$ and $V(x)=|x|^{\gamma}$, with $\gamma>0$, are in this class. We refer the reader to the respective assumptions (A1)–(A3) in the cited paper, including the discussion and the sufficient conditions under which they are satisfied.  We note on passing that our Assumption \eqref{A} is formally slightly more restrictive than in that paper. To obtain the sharpest results, we assume both profiles $f$ and $g$ are continuous and strictly monotone, and that (A2) holds for every fixed $t_0>0$. 

Due to the fact that $\int_{|y|>1} \nu(y) dy < \infty$, we have $\lim\limits_{r \to \infty} f(r) = 0$.
Moreover, by \eqref{A1} there exists a constant $C_7 \geq 1$ such that
		\begin{align}\label{eq:f_comp}
		f(r) \leq C_7 f(r+1), \quad r \geq 1.
		\end{align}
	From \cite[Lemma 3.1]{KSch} we know that the convolution condition \eqref{A1} can be restated as follows:\ there exists a constant $C_8 >0$ such that
	\begin{equation}\label{eq:DJP}
		\int_{\R^d} f_1(|x-z|) f_1(|z-y|) dz \leq C_8 f_1(|x-y|), \quad x,y \in \R^d,
	\end{equation}
and that there is $C_9>0$ such that
\begin{equation}\label{eq:DJP_point}
		f_1(|x|)f_1(|x-y|) \leq C_9 f_1(|y|), \quad x,y \in \R^d.
\end{equation}
We note that the ground state eigenfunction $\varphi_0$ admits a version that is continuous and bounded on $\R^d$; see, e.g., \cite[dicussion on p.\ 12]{KSch}. We always work with this version. Combining this with the fact that $\varphi_0$ is strictly positive, and using the sharp two-sided estimate established in \cite[Corollary 2.2]{Kaleta-Lorinczi-AoP}, we find that there exists a constant $C_{10} \geq 1$ such that
	\begin{equation}\label{eq:gs}
		\frac{1}{C_{10}}\frac{f_1(|x|)}{g(|x|)}	\leq \varphi_0(x) \leq C_{10} \frac{f_1(|x|)}{g(|x|)}, \quad x \in \R^d. 
	\end{equation}
	
	\section{Proof of Theorem \ref{thm:main}}\label{sec:Upper_bound}
	
Throughout this section, we assume that \eqref{A} and \eqref{eq:decr_to_zero} hold.	The proof of the main result is based on two lemmas. The starting point of our analysis is the following upper bound for the kernel $q_t(x,y)$:\  for every $T>0$ there exist $\rho>1$, a constant $c_1=c_1(T)>0$ and a constant $K>0$ (independent of $T$)  such that, for every $x,y \in \R^d$ and all $t \geq T$,
  \begin{align}\label{eq:est_noniuc_proof}
	q_t(x,y) \leq c_1 \left(1 + e^{\lambda_0 t}\frac{\I_{\left\{|x|, |y| > \rho \right\}}}{f_1(|x|) f_1(|y|)} \int_{\rho-1 < |z| < |x| \vee |y|} f_1(|x-z|)f_1(|z-y|) e^{-K t g(|z|)} dz \right),
  \end{align}
see \eqref{eq:est_noniuc}. As mentioned in the Introduction, this bound was established in \cite{KSch} for a suitable choice of $K$. However, we emphasize that our argument in this section is general:\ we show that if inequality \eqref{eq:est_noniuc_proof} holds for some $K>0$, then Theorem \ref{thm:main} remains valid for the same value of $K$.

	The first lemma shows that the function $\alpha_t$ that appears in our upper estimate should take the form
		\[
		\alpha_t(u)= \big(f^2\big)^{-1}\left(\frac{\kappa(t)}{u}\right), \quad u \geq \kappa(t),
	\]
	for some constant $\kappa(t)>0$. Clearly, $\big(f^2\big)^{-1} = \big(f_1^2\big)^{-1}$ on $(0,1]$. 
	
	\begin{lemma} \label{lem:alpha}
		 For every $t>0$  there exists a constant $\kappa(t) >0$ such that for every nonnegative $h \in L^1(\mu)$ with $\left\|h\right\|_{L^1(\mu)} = 1$ we have
		\begin{equation*}
			Q_t h(x) \leq  \frac{\kappa(t)}{f_1^2(|x|)}, \quad x \in \R^d. 
		\end{equation*}
	In particular, we have the inclusion 
	\begin{equation}\label{ieq:incl}
		\{x \in \R^d: Q_t h(x)  \geq u  \} \subset \Big\{x \in \R^d: |x| \geq \alpha_t(u) \Big\}, \quad u \geq \kappa(t). 
	\end{equation}	
	\end{lemma}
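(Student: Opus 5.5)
The plan is to prove a pointwise bound on the kernel that is uniform in the second variable:
\[
\sup_{y \in \R^d} q_t(x,y) \leq \frac{\kappa(t)}{f_1^2(|x|)}, \qquad x \in \R^d .
\]
Since $h \geq 0$ and $\int h \, d\mu = 1$, this gives $Q_t h(x) = \int q_t(x,y) h(y) \mu(dy) \leq \sup_y q_t(x,y)$, which is the asserted estimate. To get the kernel bound for a fixed $t>0$, I apply \eqref{eq:est_noniuc_proof} with $T := t$. This is legitimate because, as noted after \eqref{eq:F_def}, the estimate holds for every $t \geq T$ with an arbitrary $T>0$. It produces constants $\rho>1$ and $c_1 = c_1(t)$.

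In \eqref{eq:est_noniuc_proof} I first drop the damping factor, using $e^{-Ktg(|z|)} \leq 1$ since $g>0$. I also drop the restriction $\rho - 1 < |z| < |x| \vee |y|$ and the indicator. This leaves
\[
q_t(x,y) \leq c_1\Big(1 + \frac{e^{\lambda_0 t}}{f_1(|x|) f_1(|y|)} \int_{\R^d} f_1(|x-z|) f_1(|z-y|)\, dz\Big).
\]
The convolution inequality \eqref{eq:DJP} bounds the integral by $C_8 f_1(|x-y|)$. Then the pointwise inequality \eqref{eq:DJP_point}, in the form $f_1(|x|) f_1(|x-y|) \leq C_9 f_1(|y|)$, gives
\[
\frac{f_1(|x-y|)}{f_1(|x|) f_1(|y|)} \leq \frac{C_9}{f_1^2(|x|)} .
\]
Finally, $f_1 \leq 1$ lets me bound the constant term by $1 \leq 1/f_1^2(|x|)$. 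Hence $\kappa(t) := c_1(t)\,(1 + C_8 C_9 e^{\lambda_0 t})$ works.

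For the inclusion \eqref{ieq:incl}, suppose $Q_t h(x) \geq u$ with $u \geq \kappa(t)$. The first part gives $f_1^2(|x|) \leq \kappa(t)/u \leq 1$. If $f(|x|) < 1$, then $f_1 = f$ at $|x|$, and since $f^2$ is strictly decreasing and continuous, applying $(f^2)^{-1}$ reverses the inequality and yields $|x| \geq (f^2)^{-1}(\kappa(t)/u) = \alpha_t(u)$. If instead $f(|x|) \geq 1$, then $f_1(|x|) = 1$, which forces $u = \kappa(t)$ and $\kappa(t)/u = 1$. This boundary case follows again from the monotonicity of $f$, using the remark that $(f^2)^{-1} = (f_1^2)^{-1}$ on $(0,1]$.

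I do not expect a genuine obstacle. The only point that needs care is choosing the right pair of earlier inequalities, \eqref{eq:DJP} and then \eqref{eq:DJP_point}, so that the factor $1/f_1(|y|)$ is absorbed and the bound becomes independent of $y$. Beyond that, I only need to check the direction of the inverse for the decreasing profile $f$ in the boundary case.
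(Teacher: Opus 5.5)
Your proof of the pointwise bound is correct and is the paper's argument. You drop $e^{-Ktg(|z|)}$, apply \eqref{eq:DJP} to the $z$-integral, then use \eqref{eq:DJP_point} to absorb $1/f_1(|y|)$, and you get the same $\kappa(t)=c_1(t)(1+C_8C_9e^{\lambda_0 t})$. Bounding $\sup_y q_t(x,y)$ rather than integrating against $h$, and not treating $|x|\le\rho$ separately, are cosmetic differences.

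The step that fails as written is your boundary case for the inclusion \eqref{ieq:incl}. If $f(|x|)\ge 1$, monotonicity of $f$ gives $|x|\le f^{-1}(1)=(f^2)^{-1}(1)=\alpha_t(\kappa(t))$, which is the opposite of what you need. The remark $(f^2)^{-1}=(f_1^2)^{-1}$ on $(0,1]$ does not help, because $f_1^2\equiv 1$ on $(0,f^{-1}(1)]$, so $f_1^2$ is not invertible at the value $1$. The bound $Q_th(x)\le\kappa(t)/f_1^2(|x|)$ alone does not rule out a point with $|x|<f^{-1}(1)$ and $Q_th(x)=\kappa(t)=u$. The paper's proof also passes over this point without comment.

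The simplest repair uses the freedom in choosing $\kappa(t)$. Keep $\kappa_0(t):=c_1(t)(1+C_8C_9e^{\lambda_0 t})$ for the pointwise bound and set $\kappa(t):=2\kappa_0(t)$. Then $Q_th(x)\ge u\ge\kappa(t)$ gives $f_1^2(|x|)\le \kappa(t)/(2u)\le 1/2$. Hence $f_1(|x|)=f(|x|)$ and $f^2(|x|)<\kappa(t)/u$, so $|x|>\alpha_t(u)$ by strict monotonicity of $f^2$.

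Alternatively, you can keep the original constant and show $Q_th(x)<\kappa(t)$ strictly whenever $f_1(|x|)=1$. For $|y|\le\rho$ you have $q_t(x,y)\le c_1<\kappa(t)$. For $|y|>\rho$, the $z$-integral in \eqref{eq:est_noniuc_proof} omits the ball $\{|z|\le\rho-1\}$, where the integrand is positive, so it is strictly below $C_8f_1(|x-y|)$. Thus $q_t(x,y)<\kappa(t)$ for every $y$, and integrating against $h\,d\mu$ with $\int h\,d\mu=1$ preserves the strict inequality.
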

	
	\begin{proof}
		Let $h \in L^1(\mu)$, $h \geq 0$ be such that $\int h d\mu = 1$ and let $ t >0$. Suppose first that $|x| \leq \rho $. Then, by \eqref{eq:est_noniuc_proof}, 
		\[
		Q_t h(x) = \int_{\R^d} q_t(x,y) h(y) \mu(dy) \leq c_1 \int_{\R^d} h(y) \mu(dy) = c_1 \leq  \frac{c_1}{f_1^2(|x|)}.
		\]
	When $|x| > \rho$, then \eqref{eq:est_noniuc_proof} yields
	\begin{align*}
		& Q_t h(x) \\ 
		    & \leq c_1\left(1+ \int_{|y| > \rho}\left(\frac{e^{\lambda_0 t}}{f_1(|x|) f_1(|y|)} \int_{\rho-1 < |z| < |x| \vee |y|} f_1(|x-z|)f_1(|z-y|) e^{-K t g(|z|)} dz \right) h(y) \mu(dy)\right).
	\end{align*}
	Using the fact that $e^{-K t g(|z|)} \leq 1$ and applying \eqref{A1} in the form \eqref{eq:DJP} to the inner integral on the right hand side, we get
	\begin{align*}
			Q_t h(x) \leq  c_1\left(1+ C_8 e^{\lambda_0 t} \int_{|y| > \rho} \frac{f_1(|x-y|)}{f_1(|x|) f_1(|y|)} h(y) \mu(dy)\right).
	\end{align*}
Finally, by \eqref{eq:DJP_point}, we obtain 
	\begin{equation*}
		Q_t h(x) \leq  c_1\left(1+ C_8 C_9 \frac{e^{\lambda_0 t}}{f_1^2(|x|)} \int_{\R^d} h(y) \mu(dy) \right) \leq \frac{\kappa(t)}{f_1^2(|x|)},
	\end{equation*}
with $\kappa(t) = c_1(1+ C_8 C_9 e^{\lambda_0 t})$. 

The claimed inclusion then follows from the implication
\[
Q_t h(x) \geq u \ \Longrightarrow \ f_1^2(|x|) \leq  \frac{\kappa(t)}{u}.
\]
This completes the proof. 
	\end{proof}

The next lemma is the main step of the proof. 

	\begin{lemma}\label{lem:sup_estimates}
		 For every $t>0$  there exists a constant $C(t)>0$ such that
	\begin{equation*}
	 \norm{Q_t \I_{B_r(0)^c}}_{L^\infty(\mu)} \leq \frac{C(t)}{g^2(r)} e^{- K t g(r)}, \quad r >0.
	\end{equation*}
	\end{lemma}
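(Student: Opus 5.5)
We need to bound $\sup_x Q_t\I_{B_r(0)^c}(x) = \sup_x \int_{|y|\ge r} q_t(x,y)\,\varphi_0^2(y)\,dy$. For small $r$ (say $r\le r_0$, with $r_0$ large but fixed) the claim is trivial: $Q_t$ is an $L^\infty$-contraction, so the left side is at most $1$, and $g^2(r)e^{Ktg(r)}$ is bounded on $[0,r_0]$. So it suffices to treat $r \ge r_0 > \rho+1$ (and $r\ge R_0$). I would insert the upper bound \eqref{eq:est_noniuc_proof} together with the ground state estimate \eqref{eq:gs}, $\varphi_0^2(y)\le C_{10}^2 f_1^2(|y|)/g^2(|y|)$, and split into two terms.

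\emph{The constant term.} This gives $c_1\int_{|y|\ge r}\mu(dy)\le c\int_{|y|\ge r} f_1^2(|y|)g^{-2}(|y|)\,dy \le c\, g^{-2}(r)\int_{|y|\ge r} f_1^2(|y|)\,dy$. I need this to be $\lesssim g^{-2}(r)e^{-Ktg(r)}$, i.e.\ $\int_{|y|\ge r}f^2 \lesssim e^{-Ktg(r)}$. Here I use \eqref{eq:decr_to_zero}: eventually $g(s)\le \varepsilon|\log f(s)|$, so $e^{-Ktg(s)}\ge f(s)^{Kt\varepsilon}$, and with $Kt\varepsilon<1$ we get $f^2(s)\le f(s)\,e^{-Ktg(s)}$. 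Since $g$ is increasing, $\int_{|y|\ge r}f^2(|y|)dy\le e^{-Ktg(r)}\int_{|y|\ge r} f(|y|)dy$, which is finite because $f\asymp\nu$ on $\{|y|>1\}$ and $\nu$ is a Lévy density.

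\emph{The main term.} This is
\[
c\,e^{\lambda_0 t}\frac{1}{f_1(|x|)}\int_{|y|\ge r}\frac{f_1(|y|)}{g^2(|y|)}\int_{\rho-1<|z|<|x|\vee|y|} f_1(|x-z|)f_1(|z-y|)e^{-Ktg(|z|)}\,dz\,dy .
\]
I would bound $g^{-2}(|y|)\le g^{-2}(r)$ and then split the $z$-integral into $|z|\ge r$ and $|z|<r$. On $\{|z|\ge r\}$, $e^{-Ktg(|z|)}\le e^{-Ktg(r)}$. Using \eqref{eq:DJP_point} in the form $f_1(|y|)f_1(|z-y|)\le C_9 f_1(|z|)$ (valid by symmetry of the roles), the $y$-integration can be performed first, giving $\int_y f_1(|y|)f_1(|z-y|)dy\le C_8 f_1(|z|)$ by \eqref{eq:DJP}. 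Then $\int_z f_1(|x-z|)f_1(|z|)dz\le C_8 f_1(|x|)$, which cancels the prefactor $1/f_1(|x|)$. Hence this piece is bounded by $c\,g^{-2}(r)e^{-Ktg(r)}$.

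\emph{The main obstacle.} The piece with $\rho-1<|z|<r\le|y|$ is where the exponential gain is not immediate: $e^{-Ktg(|z|)}$ can be large compared with $e^{-Ktg(r)}$. Here I would exploit the fact that $y$ is far from $z$. Using \eqref{eq:DJP_point} to absorb $f_1(|x-z|)/f_1(|x|)\le C_9/f_1(|z|)$, the term reduces to
\[
g^{-2}(r)\int_{\rho-1<|z|<r}\frac{e^{-Ktg(|z|)}}{f_1(|z|)}\int_{|y|\ge r} f_1(|y|)f_1(|z-y|)\,dy\,dz .
\]
For the inner integral, I would aim for a bound of the form $\lesssim f_1(|z|)f_1(r-|z|)$-type decay, or more crudely exploit that $f_1(|y|)\le f(r)$ and apply \eqref{eq:DJP}. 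The resulting integrand, roughly $e^{-Ktg(|z|)}f(r)\dots$, is then controlled by the monotonicity in \eqref{eq:decr_to_zero}: $e^{-Ktg(s)}$ decays more slowly than any power of $f(s)$, so $f(r)^{\delta}\le e^{-Ktg(r)}$ for large $r$, and $f(r)\le f(|z|)\cdot$(ratio) compensates $e^{-Ktg(|z|)}$ via $g(|z|)\le g(r)$ together with the eventual decrease of $g/|\log f|$. This step, keeping exactly the constant $K$ in the exponent, is the delicate one. If the crude bound loses too much, I would fall back to a dyadic/annular decomposition in $|z|$ and use \eqref{eq:f_comp} to compare $f$ on neighbouring shells.
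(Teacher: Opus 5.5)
Your treatment of small $r$, of the constant term, and of the piece $|z|\ge r$ is correct and matches the paper (its $\textup{I}_1$ and $\textup{I}_4$). The gap is in the piece $\rho-1<|z|<r\le|y|$, and your first move there causes it. You bound $f_1(|x-z|)/f_1(|x|)\le C_9/f_1(|z|)$ via \eqref{eq:DJP_point} for all $z$ in the ball $\{|z|<r\}$ at once. That discards the only factor that makes the $z$-integral bounded uniformly in $x$, and the reduced quantity
\[
J(r)=\int_{\rho-1<|z|<r}\frac{e^{-Ktg(|z|)}}{f_1(|z|)}\int_{|y|\ge r}f_1(|y|)f_1(|z-y|)\,dy\,dz
\]
is genuinely too large. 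On the shell $r-1<|z|<r$ three lower bounds hold:
\[
e^{-Ktg(|z|)}\ge e^{-Ktg(r)},\qquad \frac{1}{f_1(|z|)}\ge \frac{1}{C_7 f_1(r)}\ \text{(by \eqref{eq:f_comp})},\qquad \int_{|y|\ge r}f_1(|y|)f_1(|z-y|)\,dy\ge c\,f_1(r).
\]
For the last one, restrict $y$ to a ball of radius $1/2$ inside $\{|y|\ge r,\ |y-z|<2\}$. Hence $J(r)\ge c\,r^{d-1}e^{-Ktg(r)}$. Under \eqref{eq:decr_to_zero}, $e^{-Ktg(r)}$ may decay more slowly than any power of $r$ (e.g.\ $g(r)=\log^{\theta} r$ with $\theta<1$). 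So this polynomial loss cannot be absorbed into $C(t)g^{-2}(r)e^{-Ktg(r)}$ when $d\ge 2$. A similar computation gives a loss of $r^{1-2a}$ or $\log r$ for $d=1$, $f(r)=r^{-1-2a}$, $2a\le 1$. None of your proposed repairs can fix this: not the crude bound $f_1(|y|)\le f(r)$, not the monotonicity of $g/|\log f|$ applied afterwards, and not an annular decomposition. The loss is already built into $J(r)$.

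The missing idea is to keep the kernel $f_1(|x-z|)f_1(|z|)/f_1(|x|)$, whose $z$-integral is at most $C_8$ by \eqref{eq:DJP}, and to transfer the deficit from $z$ to $y$ rather than to the radius $r$. By \eqref{eq:decr_to_zero}, $s\mapsto f_1(s)e^{Ktg(s)}$ is decreasing on some $[r_0(t),\infty)$. So for $r_0(t)<|z|\le r\le|y|$,
\[
e^{-Ktg(|z|)}=\frac{f_1(|z|)}{f_1(|z|)e^{Ktg(|z|)}}\le\frac{f_1(|z|)}{f_1(|y|)e^{Ktg(|y|)}} .
\]
The extra $1/f_1(|y|)$ cancels against your factor $f_1(|y|)/g^2(|y|)$. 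This leaves $g^{-2}(|y|)e^{-Ktg(|y|)}\le g^{-2}(r)e^{-Ktg(r)}$ times
\[
\int_{\R^d}\frac{f_1(|x-z|)f_1(|z|)}{f_1(|x|)}\int_{\R^d}f_1(|z-y|)\,dy\,dz\le C_8\int_{\R^d}f_1(|y|)\,dy .
\]
Only on the bounded range $\rho-1<|z|\le r_0(t)$ is your absorption harmless. There $1/f_1(|z|)\le 1/f_1(r_0(t))$, and one more use of \eqref{eq:DJP_point} reduces the term to a $t$-dependent multiple of $\int_{|y|\ge r}f_1^2(|y|)\,dy$. You already control that as in the constant term. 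You did point to the eventual decrease of $g/|\log f|$, which is the right ingredient, but it must be used before the $z$-integrability is thrown away, not after.
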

	
	\begin{proof}
		Let $ t >0$. We need some preparation. Note that for any parameters $a, b>0$
	\begin{align} \label{eq:a_b_event_decr}
	 \text{the function} \ r \mapsto f_1^a(r) e^{b t g(r)} \ \text{is eventually strictly decreasing {(to 0)} on }  (0,\infty).  
\end{align}
Indeed,
	\begin{equation*}
		f_1^a(r) e^{b t g(r)} = \exp\big(a \log(f_1(r)) +  b t g(r)\big) = \exp\left(-|\log(f_1(r))|\Big(a - \frac{ b  t g(r)}{|\log(f_1(r))|}\Big)\right) , 
	\end{equation*}
	the map $r \mapsto |\log(f_1(r))|$ eventually strictly increases to $\infty$ by \eqref{A}, and $r \mapsto a - \frac{ b  t g(r)}{|\log(f_1(r))|}$ eventually increases to $a$ by \eqref{eq:decr_to_zero}. 
	
	By \eqref{eq:a_b_event_decr},
	\begin{align} \label{eq:event_decr}
	\text{there exists} \ r_0(t) \geq \rho \ \text{such that the map} \ r \mapsto f_1(r) e^{K t g(r)} \ \text{is decreasing (to 0) on} \ [r_0(t),\infty).  
\end{align}	
 In particular, we may assume that $r_0(t)$ is large enough so that
\begin{align} \label{eq:f_1_by_exp}
	f_1(r) \leq e^{-Ktg(r)}, \quad r \geq r_0(t).
\end{align}	

Observe that when $0 < r \leq r_0(t)$, then we simply have
\[
	 \norm{Q_t \I_{B_r(0)^c}}_{L^\infty(\mu)} \leq 1 \leq \left({e^{K t g(r_0(t))}}{g^2(r_0(t))} \right)\frac{1}{g^2(r)} e^{- K t g(r)} =: \frac{C(t)}{g^2(r)} e^{- K t g(r)}.
\]
Therefore, for the rest of the proof, we assume that $r > r_0(t)$. 	
The heat kernel upper bound \eqref{eq:est_noniuc_proof} leads to the following estimates:\ 
		if $|x| \leq \rho$, then
		\[
		0 \leq Q_t \I_{B_r(0)^c}(x) = \int_{B_r(0)^c} q_t(x,y) \mu(dy) \leq c_1 \int_{B_r(0)^c} \varphi_0^2(y) dy =: c_1\textup{I}_1,
		\]
	 and if $|x| > \rho$, then
	\begin{align*}
		0 \leq Q_t \I_{B_r(0)^c}(x) & \leq c_1 \int_{|y| \geq r} \varphi_0^2(y) dy \\
		         & + c_1 \int_{|y| \geq r} \left(\frac{e^{\lambda_0 t}}{f_1(|x|) f_1(|y|)} \int_{\rho-1 < |z| \leq r_0(t)} f_1(|x-z|)f_1(|z-y|) e^{-K t g(|z|)} dz \right) \mu(dy) \\
		         & + c_1 \int_{|y| \geq r} \left(\frac{e^{\lambda_0 t}}{f_1(|x|) f_1(|y|)} \int_{r_0(t) < |z| \leq r} f_1(|x-z|)f_1(|z-y|) e^{-K t g(|z|)} dz \right) \mu(dy) \\
						 & + c_1 \int_{|y| \geq r} \left(\frac{e^{\lambda_0 t}}{f_1(|x|) f_1(|y|)} \int_{r < |z| < |x| \vee |y|} f_1(|x-z|)f_1(|z-y|) e^{-K t g(|z|)} dz \right) \mu(dy) \\
						                    & =: c_1 (\textup{I}_1+\textup{I}_2+\textup{I}_3+\textup{I}_4).
	\end{align*}
Each of the integrals $\textup{I}_i$, $i=1,\ldots,4$, requires a different approach. We start with $\textup{I}_4$. 
Since the map $s \mapsto e^{-K t g(s)}$ is decreasing, we have
		\begin{align*}
		\textup{I}_4 & \leq e^{-K t g(r)} \int_{|y| \geq r} \left(\frac{e^{\lambda_0 t}}{f_1(|x|) f_1(|y|)} \int_{r < |z| < |x| \vee |y|} f_1(|x-z|)f_1(|z-y|) dz \right) \mu(dy)\\
		   & \leq c_2 e^{\lambda_0 t} e^{-K t g(r)} \int_{|y| \geq r} \frac{f_1(|x-y|)}{f_1(|x|) f_1(|y|)}  \mu(dy),
		\end{align*}
		where in the last inequality we used \eqref{eq:DJP}. By the fact that $\mu(dy) = \varphi_0^2(y) dy$, the upper estimate in \eqref{eq:gs} and one more use of \eqref{eq:DJP}, we obtain that
		\begin{align*}
		\textup{I}_4 & \leq c_3 e^{\lambda_0 t} e^{-K t g(r)} \int_{|y| \geq r} \frac{f_1(|x-y|)f_1(|y|)}{f_1(|x|) g^2(|y|) }  dy \\
		    & \leq c_3 e^{\lambda_0 t} \frac{e^{-K t g(r)}}{g^2(r)} \int_{\R^d} \frac{f_1(|x-y|)f_1(|y|)}{f_1(|x|) }  dy \\
				& \leq c_4 e^{\lambda_0 t} \frac{e^{-K t g(r)}}{g^2(r)}.
		\end{align*}
In order to estimate $\textup{I}_3$ we first write
		\begin{align*}
			\textup{I}_3 = \int_{|y| \geq r} \left(\frac{e^{\lambda_0 t}}{f_1(|x|) f_1(|y|)} \int_{r_0(t) < |z| \leq r} f_1(|x-z|)f_1(|z|)f_1(|z-y|) \frac{1}{f_1(|z|) e^{K t g(|z|)}} dz \right) \varphi_0^2(y) dy
		\end{align*}	
and since $|z| \leq r \leq |y|$	on the domain of integration, \eqref{eq:event_decr} implies that		
			\begin{align*}
			\textup{I}_3 \leq \int_{|y| \geq r} \left(\frac{e^{\lambda_0 t}}{f_1(|x|) f_1(|y|)}\frac{1}{f_1(|y|) e^{K t g(|y|)}} \int_{r_0(t) < |z| \leq r} f_1(|x-z|)f_1(|z|)f_1(|z-y|) dz \right) \varphi_0^2(y) dy.
		\end{align*}	
Further, we have	
\begin{align*}
			\textup{I}_3 & \leq c_5 e^{\lambda_0 t} \int_{|y| \geq r} \frac{e^{-K t g(|y|)}}{f_1(|x|)g^2(|y|)}  \int_{r_0(t) < |z| \leq r} f_1(|x-z|)f_1(|z|)f_1(|z-y|) dz dy \\
					& \leq c_5 e^{\lambda_0 t} \frac{e^{-K t g(r)}}{g^2(r)} \int_{\R^d} \frac{f_1(|x-z|)f_1(|z|)}{f_1(|x|)} \left(\int_{\R^d}f_1(|z-y|) dy \right) dz \\
					& \leq c_6 e^{\lambda_0 t} \frac{e^{-K t g(r)}}{g^2(r)},
		\end{align*}
	where the first inequality follows from the upper bound in \eqref{eq:gs}, the second from the monotonicity of the profile $g$ and the Tonelli theorem, and the third is a consequence of the fact that $\int_{\R^d}f_1(|y|) dy < \infty$ and \eqref{eq:DJP}. 

We now turn to estimating of $\textup{I}_2$. We first rearrange the integral, use the monotonicity of the profile $f_1$ and the inequality $e^{-K t g(|z|)} \leq 1$:\ 
\begin{align*}
\textup{I}_2 & = e^{\lambda_0 t}\int_{|y| \geq r}  \int_{\rho-1 < |z| \leq r_0(t)} \frac{1}{f_1^2(|z|)}\frac{f_1(|x-z|) f_1(|z|)}{f_1(|x|)} \frac{f_1(|z|) f_1(|z-y|)}{f_1(|y|)} e^{-K t g(|z|)} dz  \varphi_0^2(y) dy \\
    & \leq \frac{e^{\lambda_0 t}}{f_1^2(r_0(t))} \int_{|y| \geq r}  \int_{\rho-1 < |z| \leq r_0(t)} \frac{f_1(|x-z|) f_1(|z|)}{f_1(|x|)} \frac{f_1(|z|) f_1(|z-y|)}{f_1(|y|)} dz  \varphi_0^2(y) dy.
\end{align*}
Then we use \eqref{eq:DJP_point} to show that $\frac{f_1(|x-z|) f_1(|z|)}{f_1(|x|)} \leq c_7$ and further apply 
\eqref{eq:DJP}  to get
\[
\sup_{|y| \geq r} \int_{\rho-1 < |z| \leq r_0(t)} \frac{f_1(|z|) f_1(|z-y|)}{f_1(|y|)} dz \leq \sup_{y \in \R^d} \int_{\R^d} \frac{f_1(|z|) f_1(|z-y|)}{f_1(|y|)} dz < \infty.
\]
This yields 
\[
\textup{I}_2 \leq c_8 \frac{e^{\lambda_0 t}}{f_1^2(r_0(t))} \int_{|y| \geq r}  \varphi_0^2(y) dy.
\]
In particular, 
\[
\textup{I}_1 + \textup{I}_2 \leq \left(1 + c_8 \frac{e^{\lambda_0 t}}{f_1^2(r_0(t))} \right) \int_{|y| \geq r} \varphi_0^2(y) dy.
\]
The upper estimate in \eqref{eq:gs},  monotonicity of the profiles and \eqref{eq:f_1_by_exp} imply
\[
\varphi_0^2(y) \leq c_9 \frac{f^2_1(|y|)}{g^2(|y|)} \leq c_9 \frac{f_1(r)}{g^2(r)} f_1(|y|) \leq c_{10} \frac{e^{-K t g(r)}}{g^2(r)} f_1(|y|), \quad |y| \geq r,
\]
which leads to the final bound 
\[
\textup{I}_1 + \textup{I}_2 \leq c_{10} \left(1 + c_8 \frac{e^{\lambda_0 t}}{f_1^2(r_0(t))} \right) \frac{e^{-K t g(r)}}{g^2(r)} \int_{\R^d} f_1(|y|) dy \leq c_{11} \left(1 + c_8 \frac{e^{\lambda_0 t}}{f_1^2(r_0(t))} \right) \frac{e^{-K t g(r)}}{g^2(r)}. 
\]
By combining the estimates for all the integrals $\textup{I}_i$, $i=1,\ldots,4$, we arrive at the conclusion.
	\end{proof}

   \begin{proof}[Proof of Theorem \ref{thm:main}] 
	Let $ t > 0$ and let $\alpha_t$ be the function defined in \eqref{def:alpha}. By Lemma \ref{lem:alpha} and Markov's inequality, we have, for every $h \in L^1(\mu)$ with $\left\|h\right\|_{L^1(\mu)} = 1$ and $u \geq \kappa(t)$,
		\begin{align*} 
		\mu(\{x \in \R^d: |Q_t h(x)| > u \}) & \leq \mu(\{x \in \R^d: Q_t |h|(x) > u \})  \\& = \mu\big(\{x \in \R^d: Q_t |h|(x) > u \} \cap B_{\alpha_t(u)}(0)^c\big) \\
			& \leq \frac{1}{u}\int_{\R^d} Q_t |h|(x) \I_{B_{\alpha_t(u)}(0)^c}(x)\mu (dx).
		\end{align*}
		Further, it follows from Tonelli's theorem and the symmetry of the kernels $q_t(x,y)$ that
        \begin{align*}
\int_{\R^d} Q_t |h|(x) \I_{B_{\alpha_t(u)}(0)^c}(x)\mu (dx) = \int_{\R^d} |h(y)| Q_t\I_{B_{\alpha_t(u)}(0)^c}(y)\mu (dy)
		\end{align*}
		and, consequently, by Lemma \ref{lem:sup_estimates}, we finally obtain the estimate
		\begin{align*}
			\mu(\{x \in \R^d: |Q_t h(x)| > u \}) & \leq \frac{1}{u} \norm{Q_t \I_{B_{\alpha_t(u)}(0)^c}}_{L^{\infty}(\mu)} \leq \frac{1}{u} \frac{C(t)}{g^2(\alpha_t(u))} e^{- K t g(\alpha_t(u))}.
		\end{align*}
		This completes the proof. 
	\end{proof}     
	\section{Proof of Theorem \ref{prop:main2} and Lemma \ref{lemma:lem2}}\label{sec:lower_bound}
	
We will establish the following general implication:\ if  for every $T>0$ there exists $\rho>1$, a constant $c_1=c_1(T)>0$ and a constant $\widetilde K>0$ (independent of $T$)  such that, for every $x,y \in \R^d$ and all $t \geq T$,  
  \begin{align}\label{eq:est_noniuc_proof_low}
	q_t(x,y) \geq c_1 \left(1 + e^{\lambda_0 t}\frac{\I_{\left\{|x|, |y| > \rho \right\}}}{f_1(|x|) f_1(|y|)} \int_{\rho-1 < |z| < |x| \vee |y|} f_1(|x-z|)f_1(|z-y|) e^{-\widetilde K t g(|z|)} dz \right),
  \end{align}
 then the assertion of Theorem \ref{prop:main2} holds.  
	
\begin{proof}[Proof of Theorem \ref{prop:main2}]
Assume \eqref{A} and \eqref{eq:decr_to_zero}, and fix $ t>0$. Let $h_{\varepsilon,y}(\cdot) = q_{\varepsilon t}(\cdot,y)$, $\varepsilon >0$, $y \in \R^d$. Clearly, 
\[
h_{\varepsilon,y} \geq 0 \quad \text{and} \quad \int_{\R^d} h_{\varepsilon,y}(x) \mu(dx) = 1, \quad \text{for all}  \ \varepsilon >0 \ \text{and} \ y \in \R^d.
\]
Let $|y| > \rho$ and $x \in B_{1}(y) \cap B_{|y|}(0)^c$, and consider $\widetilde y:= ((|y|-1/2)/|y|) y$. Observe that by the semigroup property and \eqref{eq:est_noniuc_proof_low} we have
\begin{align} \label{eq:Qt_estimate}
Q_t h_{\varepsilon,y}(x) = q_{(1+\varepsilon) t}(x,y) & \geq c_1 \frac{e^{(1+\varepsilon)\lambda_0 t}}{f_1^2(|y|)} \int_{B_{1/2}(\widetilde y)} f_1(|x-z|)f_1(|z-y|) e^{-(1+\varepsilon)\widetilde K t g(|z|)} dz \nonumber \\
& \geq c_1 \, |B_{1/2}(0)| \, f_1(1)f_1(2) \, \frac{e^{(1+\varepsilon)\lambda_0 t}}{f_1^2(|y|)} e^{-(1+\varepsilon)\widetilde K t g(|y|)} \\
&  \geq c_2 \, \frac{e^{(1+\varepsilon)\lambda_0 t}}{f_1^2(|y|)} e^{-(1+\varepsilon)\widetilde K t g(|y|)}, \nonumber
\end{align} 
with $c_2=c_1 |B_{1/2}(0)| f_1(1)f_1(2)$. 

Recall that the map $r \mapsto G_t(r): = f^2(r) \exp(\widetilde K t g(r))$ is continuous and, by \eqref{eq:a_b_event_decr}, it is eventually strictly decreasing on $(0,\infty)$.
In particular, there exists $r_0(t)>\rho$ such that $G_t$ is invertible on $[r_0(t),\infty)$.  Moreover, we may assume that $r_0(t)$ is chosen large enough so that $f(r)=f_1(r)$ and $g(r) \geq 1/\widetilde K$ for $r \geq r_0(t)$. We also define $\widetilde \kappa(t) = c_2 e^{-(1+2|\lambda_0|)t}$. 

Let
\[
\beta_t(u) := G_t^{-1} \left(\frac{\widetilde \kappa(t)}{u}\right), \quad u \geq u_0(t), \quad \text{with} \quad u_0(t):= \frac{\widetilde \kappa(t)}{G_t(r_0(t))}, 
\]
and further, assume that $y = y(u)$ is such that $|y| = \beta_t(u)$, $u \geq u_0(t)$. In particular,
	\begin{align}\label{eq:choice_of_y}
		\frac{\widetilde \kappa(t)}{u} = f_1^2(|y|) \exp(\widetilde K t g(|y|)).
	\end{align}
    
    We can now proceed to the conclusion.
    By taking $\varepsilon = \varepsilon(u) = 1/(\widetilde K g(|y|) \leq 1$, $u \geq u_0(t)$, we have
    \begin{equation}\label{ieq:exp}
        e^{(1+\varepsilon)\lambda_0 t} e^{-(1+\varepsilon)\widetilde K t g(|y|)} \geq e^{-(1+\varepsilon)|\lambda_0| t} e^{-t} e^{-\widetilde K t g(|y|)} \geq e^{-2|\lambda_0| t} e^{-t} e^{-\widetilde K t g(|y|)} .
    \end{equation}
    Thus by \eqref{eq:Qt_estimate}, \eqref{eq:choice_of_y} and \eqref{ieq:exp}, for $u \geq u_0(t)$, we get
	\begin{align*}
	\mu\big(\{x \in \R^d: Q_t h_{\varepsilon,y}(x) \geq u\}\big) & \geq \mu\left(\left\{x \in B_{1}(y) \cap B_{|y|}(0)^c:  c_2 \, \frac{e^{-2|\lambda_0| t}}{f_1^2(|y|)} e^{-t} e^{-\widetilde K t g(|y|)}\geq u\right\}\right) \\
                &   =  \mu\left(\left\{x \in B_{1}(y) \cap B_{|y|}(0)^c: \frac{\widetilde \kappa(t)}{f_1^2(|y|)} e^{-\widetilde K t g(|y|)}\geq u\right\}\right) \\
	              & =  \I_{\left\{ \,\frac{\widetilde \kappa(t)}{f_1^{2}(|y|)} \exp(-\widetilde K t g(|y|))\geq u\right\}} \mu\left(B_{1}(y) \cap B_{|y|}(0)^c\right).
	\end{align*}
Observe that, by \eqref{eq:choice_of_y}, the indicator function on the right hand side is equal to one as long as $u \geq u_0(t)$.
	Hence 
		\begin{align*}
\sup_{ h \geq 0 \atop \int h d\mu = 1}  \mu\big(\{x \in \R^d: Q_t h(x) \geq u\}\big) \geq \mu\left(B_{1}(y) \cap B_{|y|}(0)^c\right), \quad u \geq u_0(t).
	\end{align*}
It is easy to observe that $B_1(y) \cap B_{|y|}(0)^c \supseteq B_{1/2}$, where $B_{1/2}$ is ball of radius $1/2$. Consequently, by the lower bound in \eqref{eq:gs}, \eqref{eq:f_comp} and \eqref{A3}, we get
	\begin{align*}
		  	\sup_{ h \geq 0 \atop \int h d\mu = 1} \mu\big(\{x \in \R^d: Q_t h(x) \geq u\}\big) & \geq  \int_{ B_{1/2}} \varphi_0^2(z) dz \\
				 & \geq c_3 \int_{B_{1/2}} \frac{f_1^2(|z|)}{g^2(|z|)} dz
				   \geq c_4 \frac{f_1^2(|y|)}{g^2(|y|)}.		
	\end{align*}
	One more use of \eqref{eq:choice_of_y} reveals
	\[
	\frac{f_1^2(|y|)}{g^2(|y|)} = \frac{\widetilde \kappa(t)}{u} \frac{1}{g^2(\beta_t(u)) \exp(\widetilde K t g(\beta_t(u)))},
	\]
which leads us to a conclusion. 
\end{proof}

The following lemma provides a general and easy-to-verify sufficient condition under which the functions $u \mapsto g(\alpha_t(u)), g(\beta_t(u))$ are asymptotically equivalent. It will also be used in Section \ref{sec:lower_bound_orlicz} to analyze the behaviour of the function $u \mapsto g(\gamma_t(u))$ appearing in Theorem \ref{prop:main3}.

\begin{lemma}\label{lem:lemma} 
	Assume \eqref{A}. Suppose $F:(0,\infty) \to (0,\infty)$ is an eventually decreasing function for which there exists $r_0 > 0$ and $0<a<2$ such that
	\begin{equation*}
		f^2(r) \leq F(r) \leq f^a(r), \quad r > r_0.
	\end{equation*}
    Let $\widetilde \kappa >0$ be a constant and let $\tau(u) := F^{-1} \Big(\frac{\widetilde \kappa}{u}\Big)$, for $u$ large enough. 
    
	If $m:(0,\infty) \to (0,\infty)$ is a function which is eventually  increasing and $\omega \geq 0$ is a number such that for every  $c>0$ and $\lambda \geq 1$ we have
	\begin{align} \label{eq:cond_rate_v2_2}
		\limsup_{s \to 0^{+}} \frac{m\big(f^{-1}(c s^{\lambda})\big)}{m\big(f^{-1}(s)\big)} \leq \lambda^{ \omega},
	\end{align} 
	then, for any fixed $t > 0$,
	\begin{align*} 
		1 \leq \liminf_{u \to \infty} \frac{m(\tau(u))}{m(\alpha_t(u))} \leq \limsup_{u \to \infty} \frac{m(\tau(u))}{m(\alpha_t(u))} \leq \Big(\frac{2}{a}\Big)^{\omega}.
	\end{align*}
\end{lemma}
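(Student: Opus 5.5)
The idea is to express both $\tau(u)$ and $\alpha_t(u)$ as $f^{-1}$ of suitable powers of a small parameter, and then to use monotonicity of $m$ together with \eqref{eq:cond_rate_v2_2}.

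\emph{Setting up the two points.} By definition \eqref{def:alpha}, $f^2(\alpha_t(u)) = \kappa(t)/u$, that is, $\alpha_t(u) = f^{-1}(s)$ with $s = s(u) := (\kappa(t)/u)^{1/2} \to 0^+$. Similarly, $F(\tau(u)) = \widetilde\kappa/u$. For $u$ large both $\tau(u)$ and $\alpha_t(u)$ exceed $r_0$ and lie in the region where $F$ is decreasing, since both tend to $\infty$ as $u\to\infty$. (For $\tau$ this follows from $F \geq f^2 > 0$ and $F$ eventually decreasing with $F\to 0$, which is forced by $F \le f^a$.)

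\emph{Lower bound.} From $f^2(\tau(u)) \leq F(\tau(u)) = \widetilde\kappa/u = (\widetilde\kappa/\kappa(t))\, s^2$ we get $\tau(u) \geq f^{-1}(c\, s^2)$ with $c=\widetilde\kappa/\kappa(t)$, because $f$ is strictly decreasing. This comparison is not directly with $\alpha_t(u)=f^{-1}(s^2)$ with the same constant, which is where the slack lies. For the lower bound we need $m(\tau(u)) \gtrsim m(\alpha_t(u))$. Writing $\alpha_t(u) = f^{-1}\big(c^{-1}(c s^2)\big)$ and setting $\sigma := c s^2$, we have $\alpha_t(u) = f^{-1}(c^{-1}\sigma)$ and $\tau(u)\ge f^{-1}(\sigma)$. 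Applying \eqref{eq:cond_rate_v2_2} with $\lambda=1$ and constant $c^{-1}$ gives $\limsup m(f^{-1}(c^{-1}\sigma))/m(f^{-1}(\sigma)) \le 1$. Since $m$ is eventually increasing, $m(\tau(u)) \ge m(f^{-1}(\sigma))$, so $\liminf m(\tau)/m(\alpha_t) \ge 1$.

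\emph{Upper bound.} From $f^a(\tau(u)) \geq F(\tau(u)) = \widetilde\kappa/u$ we get $\tau(u) \leq f^{-1}\big((\widetilde\kappa/u)^{1/a}\big)$. Let $\sigma' := f^2(\alpha_t(u)) \cdot$ const $=\kappa(t)/u$, so that $\alpha_t(u) = f^{-1}((\kappa(t)/u)^{1/2})$. Put $s' := (\kappa(t)/u)^{1/2}$; then $(\widetilde\kappa/u)^{1/a} = c'\, (s')^{2/a}$ with $c' = (\widetilde\kappa/\kappa(t))^{1/a}$. Monotonicity of $m$ gives
\[
m(\tau(u)) \le m\big(f^{-1}(c' (s')^{\lambda})\big), \qquad \lambda = 2/a \ge 1,
\]
and \eqref{eq:cond_rate_v2_2} yields $\limsup m(\tau)/m(\alpha_t) \le (2/a)^\omega$.

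The only delicate points are bookkeeping. First, $s\to0^+$ as $u\to\infty$, so the $\limsup_{s\to0^+}$ hypothesis transfers to $u\to\infty$. Second, every argument must lie in the range where $f^{-1}$ is defined and where $m$ and $F$ are monotone. I expect the main obstacle to be merely ensuring that $\lambda=2/a\ge1$ (true since $a<2$) and that the constants $c$ in the hypothesis may be arbitrary, which the statement allows.
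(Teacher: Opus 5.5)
Your argument is correct and is essentially the paper's proof: you sandwich $\tau(u)$ between $f^{-1}\big((\widetilde\kappa/u)^{1/2}\big)$ and $f^{-1}\big((\widetilde\kappa/u)^{1/a}\big)$. You then use the eventual monotonicity of $m$ together with \eqref{eq:cond_rate_v2_2}, taking $\lambda=1$ for the lower bound and $\lambda=2/a$, $c=(\widetilde\kappa/\kappa(t))^{1/a}$ for the upper bound. One slip needs fixing: in the lower-bound paragraph you write $f^{-1}$ where $(f^2)^{-1}$ is meant. It should read $\tau(u)\ge f^{-1}(\sqrt{c}\,s)$ and $\alpha_t(u)=f^{-1}(s)=f^{-1}\big(c^{-1/2}\cdot\sqrt{c}\,s\big)$, so the hypothesis is applied with constant $c^{-1/2}$ rather than $c^{-1}$. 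This is harmless, since the hypothesis holds for every $c>0$.
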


\begin{proof}
	First we observe that $(f^b)^{-1}(s) = f^{-1}(s^{1/b})$, $b>0$.
	Thus, there exists $s_0> 0$ such that
	\begin{equation}\label{ieq:inverses2}
		f^{-1} \big( s^{\frac{1}{2}}\big) \leq F^{-1} (s) \leq f^{-1} \big( s^{\frac{1}{a}}\big), \quad s \in (0, s_0).
	\end{equation}
	Fix $ t > 0$ and recall the definition \eqref{def:alpha} of the function $\alpha_t$, for some constant $\kappa(t)>0$. As $t$ is fixed, below we write $\kappa = \kappa(t)$ for shorthand. By the eventual monotonicity of $m$ and \eqref{ieq:inverses2}, for sufficiently large $u>0$, we have
	\[
	m \circ f^{-1}\left(\left(\tfrac{\widetilde \kappa}{u}\right)^\frac{1}{2}\right)
	\leq m \circ F^{-1}\left(\tfrac{\widetilde \kappa}{u}\right) 
	\leq m \circ f^{-1}\left(\left(\tfrac{\widetilde \kappa}{u}\right)^{\frac{1}{a}}\right) = m \circ f^{-1}\left(\left(\tfrac{\widetilde \kappa}{\kappa}\right)^{\frac{1}{a}}\left(\tfrac{\kappa}{u}\right)^{\frac{1}{a}}\right).
	\]
	We can write
	\[
	m(\alpha_t(u)) = m \circ f^{-1}\left(\left(\tfrac{ \kappa}{u}\right)^\frac{1}{2}\right) = m \circ f^{-1}\left(\left(\tfrac{ \kappa}{\widetilde\kappa}\right)^\frac{1}{2}\left(\tfrac{\widetilde \kappa}{u}\right)^{\frac{1}{2}}\right)
	\]
	and further
	\[
	\frac{m \circ f^{-1}\left(\left(\tfrac{\widetilde \kappa}{u}\right)^\frac{1}{2}\right)}{m \circ f^{-1}\left(\left(\tfrac{ \kappa}{\widetilde\kappa}\right)^{\frac{1}{2}}\left(\tfrac{\widetilde \kappa}{u}\right)^{\frac{1}{2}}\right)}
	\leq \frac{m(\tau(u))}{m(\alpha_t (u))} 
	\leq \frac{m\circ f^{-1}\left(\left(\tfrac{\widetilde \kappa}{\kappa}\right)^{\frac{1}{a}}\left(\tfrac{\kappa}{u}\right)^{\frac{1}{2}\frac{2}{a}}\right)}{m \circ f^{-1}\left(\left(\tfrac{\kappa}{u}\right)^{\frac{1}{2}}\right)}.
	\]
	Applying \eqref{eq:cond_rate_v2_2} with $c = ( \widetilde \kappa / \kappa)^{\frac{1}{a}}$
	and $\lambda = 2/a$, we get
	\[
	\limsup_{u \to \infty} \frac{m(\tau(u))}{m(\alpha_t (u))} \leq \ \Big(\frac{2}{a}\Big)^{\omega}.
	\]
	Similarly, by using \eqref{eq:cond_rate_v2_2} with $c = ( \kappa / \widetilde\kappa)^{\frac{1}{2}}$
	and $\lambda = 1$, we obtain
	\[
	\liminf_{u \to \infty} \frac{m(\tau(u))}{m(\alpha_t (u))} = \left(\limsup_{u \to \infty} \frac{m(\alpha_t (u))}{m(\tau(u))}\right)^{-1} \geq 1.
	\]
	This completes the proof. 
\end{proof}

We are now in a position to establish \eqref{eq:slow_rate_intro} and \eqref{eq:res_rate}.

\begin{lemma}\label{lemma:lem2}
		Assume \eqref{A}, \eqref{eq:decr_to_zero} and fix $ t > 0$. 
\begin{itemize}
\item[(a)] Then for every $\varepsilon \in (0,1)$ there exists $r_{\varepsilon} >0 $ such that 
	\begin{equation}\label{ieq:inverses_1}
		f^{2} (r) \leq G_t(r) \leq f^{2-\varepsilon} (r), \quad r > r_{\varepsilon},
	\end{equation}
    and, for every $\delta>0$, we have
    \[
    \lim_{u \to \infty} \frac{w_t\big(\alpha_t(u)\big)}{u^{\delta}}  = \lim_{u \to \infty} \frac{\widetilde w_t\big(\beta_t(u)\big)}{u^{\delta}} = 0.
    \]
\item[(b)] If, in addition, the potential profile $g$ satisfy condition \eqref{eq:cond_rate_v2_2} with some $\omega \geq 0$, then 
	\begin{align*}
		\lim_{u \to \infty} \frac{g(\beta_t(u))}{g(\alpha_t(u))} = 1.
	\end{align*}
    \end{itemize}
\end{lemma}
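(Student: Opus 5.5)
Part (a), the two-sided bound \eqref{ieq:inverses_1}, comes straight from \eqref{eq:decr_to_zero}. Since $\widetilde K t g(r)\ge 0$, we get $G_t(r)=f^2(r)e^{\widetilde K t g(r)}\ge f^2(r)$. For the other side, once $f(r)<1$ we can write $G_t(r)=\exp\big(-|\log f(r)|\,(2-\widetilde K t\, g(r)/|\log f(r)|)\big)$. The ratio $g(r)/|\log f(r)|$ tends to $0$, so for $r>r_\varepsilon$ we have $\widetilde K t\, g(r)/|\log f(r)|\le\varepsilon$, and hence $G_t(r)\le f^{2-\varepsilon}(r)$.

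For the rate limits, set $r=\alpha_t(u)$, so that $f^2(r)=\kappa/u$, i.e.\ $u=\kappa f^{-2}(r)$. Then
\[
\frac{w_t(\alpha_t(u))}{u^\delta}=\kappa^{-\delta}\exp\big(2\log g(r)+Ktg(r)-2\delta|\log f(r)|\big),
\]
and this tends to $0$ as $u\to\infty$, because $r\to\infty$, $g(r)=o(|\log f(r)|)$ and $\log g(r)\le g(r)$. Similarly, with $r=\beta_t(u)$ we have $G_t(r)=\widetilde\kappa/u$, and \eqref{ieq:inverses_1} gives $u\ge\widetilde\kappa f^{-(2-\varepsilon)}(r)$. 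The same computation then yields $\widetilde w_t(\beta_t(u))/u^\delta\le c\exp\big(o(|\log f(r)|)-\delta(2-\varepsilon)|\log f(r)|\big)\to0$. Here $\beta_t(u)\to\infty$ since $G_t$ is eventually strictly decreasing to $0$ by \eqref{eq:a_b_event_decr}.

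For part (b), I apply Lemma \ref{lem:lemma} with $F=G_t$, $m=g$ and $a=2-\varepsilon$. $G_t$ is eventually decreasing, $g$ is increasing, and \eqref{ieq:inverses_1} supplies the sandwich $f^2\le F\le f^a$. Note that $\tau(u)=\beta_t(u)$. The lemma gives
\[
1\le\liminf_{u\to\infty}\frac{g(\beta_t(u))}{g(\alpha_t(u))}\le\limsup_{u\to\infty}\frac{g(\beta_t(u))}{g(\alpha_t(u))}\le\Big(\frac{2}{2-\varepsilon}\Big)^{\omega}.
\]
Since $\varepsilon\in(0,1)$ is arbitrary and the left-hand side does not depend on $\varepsilon$, letting $\varepsilon\downarrow0$ gives the limit $1$.

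The one step that needs care is the inverse-function bookkeeping. I must make sure $\beta_t$ is defined through the inverse on the eventually monotone branch, which Theorem \ref{prop:main2} sets up with $r_0(t)$. I also need $u$ large enough that $\beta_t(u)>r_\varepsilon$, so that \eqref{ieq:inverses1-style} comparisons hold; this follows because $\beta_t(u)\to\infty$. Everything else is routine.
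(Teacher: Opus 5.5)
Your proof is correct and follows essentially the same route as the paper: the sandwich $f^2\le G_t\le f^{2-\varepsilon}$ comes from \eqref{eq:decr_to_zero}, and the rate limits follow from $g(r)=o(|\log f(r)|)$ along $r=\alpha_t(u)\to\infty$ and $r=\beta_t(u)\to\infty$. You write the ratio directly as an exponential with exponent $-|\log f(r)|(2\delta-o(1))$, whereas the paper first bounds $w_t(\alpha_t(u))\le (u/\kappa(t))^{\delta/2}$ and uses \eqref{ieq:inverses_1} with $\varepsilon=1/2$ for $\beta_t$; this is the same estimate, and part (b) is likewise Lemma \ref{lem:lemma} with $F=G_t$, $m=g$, $a=2-\varepsilon$, followed by $\varepsilon\downarrow 0$.
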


\begin{proof}
	Fix $ t > 0$. We first prove (a). Let $\varepsilon \in (0,1)$.
	From \eqref{eq:decr_to_zero} there is $r_{\varepsilon} >0$ such that
	\begin{equation*}
		\log(f(r)) < 0 \quad \text{and} \quad \frac{\widetilde K t g(r)}{-\log(f(r))} \leq \varepsilon, \quad r > r_{\varepsilon},
	\end{equation*} 
	which, after simple manipulations, becomes
	\begin{equation}\label{eq:exponent}
		e^{\widetilde K t g(r)} \leq \big(f (r)\big)^{-\varepsilon}, \quad r > r_{\varepsilon}.
	\end{equation} 
	Hence, from the definition of the function $G_t$ and \eqref{eq:exponent} we get
	\begin{equation*}
		f^2 (r) \leq G_t(r)= f^2(r) \exp(\widetilde K t g(r)) \leq f^{2 - \varepsilon} (r), \quad r > r_{\varepsilon},
	\end{equation*}
    which is the first assertion of Part (a). In order to show the second assertion, we fix $\delta>0$. By using the fact that $\alpha_t(u) \to \infty$ as $u \to \infty$ and the argument leading to \eqref{eq:exponent}, we obtain that for sufficiently large $u$,
	\begin{align*}
		w_t(\alpha_t(u)) & = g^2(\alpha_t(u)) \exp(Kt g(\alpha_t(u))) \\ 
		& \leq \exp((Kt+1) g(\alpha_t(u))) \\
		& \leq \big(f(\alpha_t(u))\big)^{-\delta}
		= \left(f\left( f^{-1}\left(\left(\frac{\kappa(t)}{u}\right)^{\frac{1}{2}}\right)\right)\right)^{-\delta}  = \left(\frac{u}{\kappa(t)}\right)^{\frac{\delta}{2}}.
	\end{align*}
    By the same argument and \eqref{ieq:inverses_1} (applied with $\varepsilon =1/2$) we also get 
    \begin{align*}
		\widetilde w_t(\beta_t(u))\leq \left(f\left( f^{-1}\left(\left(\frac{\widetilde \kappa(t)}{u}\right)^{\frac{1}{2-1/2}}\right)\right)\right)^{-\delta}  = \left(\frac{u}{\widetilde \kappa(t)}\right)^{\frac{2\delta}{3}}.
	\end{align*}
    Thus
	\begin{align*}
		0 \leq \frac{w_t\big(\alpha_t(u)\big)}{u^{\delta}} \leq \Big(\frac{1}{\kappa(t)u}\Big)^{\frac{\delta}{2}} \to 0 \quad \text{as} \quad u \to \infty,
	\end{align*}
    and 
    \begin{align*}
		0 \leq \frac{\widetilde w_t(\beta_t(u))}{u^{\delta}} \leq \Big(\frac{1}{\widetilde \kappa^2(t)u}\Big)^{\frac{\delta}{3}} \to 0 \quad \text{as} \quad u \to \infty.
	\end{align*}
   This completes the proof of Part (a). 

Part (b) follows directly from Lemma \ref{lem:lemma} applied to $F=G_t$ and $\widetilde \kappa = \widetilde \kappa(t)$ (due to the inequalities \eqref{ieq:inverses_1} established in Part(a)), as well as $m=g$. More precisely, we obtain 
	\begin{align*} 
		1 \leq \liminf_{u \to \infty} \frac{g(\beta_t(u))}{g(\alpha_t(u))} \leq \limsup_{u \to \infty} \frac{g(\beta_t(u))}{g(\alpha_t(u))} \leq \Big(\frac{2}{2-\varepsilon}\Big)^{\omega},
	\end{align*}
for every $\varepsilon \in (0,1)$.
\end{proof}

	\section{Proof of Theorem \ref{prop:main3}, Corollary \ref{cor:cor1} and Lemma \ref{lemma:lem3}}\label{sec:lower_bound_orlicz}

\begin{proof}[Proof of Theorem \ref{prop:main3}]
Assume \eqref{A} and \eqref{eq:decr_to_zero}.  Let $\eta, \sigma:[0,\infty) \to (0,\infty)$ be increasing and continuous functions such that $1/\eta \in L^1((1,\infty),dx)$, $\sigma(r) < r$, for $r \geq 1$, and \eqref{eq:eta_decr_to_zero}, \eqref{theta-bling} hold.  Define:
\[h(x) = \frac{g^2(|x|)}{\eta(|x|)|x|^{d-1}  f_1^2(|x|)}, \quad x \in \R^d.\] 
Due to \eqref{eq:gs}, it is direct to check that $h \in L^1(\mu)$.

Suppose that  $t>0$, $\rho>1$, and $c_1=c_1(t), \widetilde K>0$ are as in estimate \eqref{eq:est_noniuc_proof_low}.  Consider arbitrary $r$ such that $ \sigma(r)> \rho+1$. For any $x \in \R^d$ such that $\sigma(r) < |x| < r$, by \eqref{eq:est_noniuc_proof_low}, we have
\begin{align*} 
Q_t h(x) & = \int_{\R^d} q_t(x,y) h(y) \varphi_0^2(y)dy \nonumber\\
& \geq \frac{c_1 e^{\lambda_0 t}}{f_1(|x|)}\int_{\genfrac{}{}{0pt}{2}{|y-x|<1}{|y|<|x|}}\left(\frac{1}{f_1(|y|)} \int_{\rho-1 < |z| < |x|} f_1(|x-z|)f_1(|z-y|) e^{-\widetilde K t g(|z|)} dz \right)h(y) \varphi_0^2(y)dy  \nonumber \\
& \geq \frac{c_1 e^{\lambda_0 t}}{f_1(|x|)}\int_{\genfrac{}{}{0pt}{2}{|y-x|<1 }{|y|<|x|}}\left(\frac{1}{f_1(|y|)} \int_{\genfrac{}{}{0pt}{2}{ |z-x|<1}{ |z| < |x|}} f_1(|x-z|)f_1(|z-y|) e^{-\widetilde K t g(|z|)} dz \right)h(y) \varphi_0^2(y)dy  \nonumber \\
& \geq \frac{c_1 e^{\lambda_0 t}}{ f_1(\sigma(r))f_1(\sigma(r)-1)}   |B_{1/2}(0)| f_1(1)f_1(2) e^{-\widetilde K t g(r)} \int_{\genfrac{}{}{0pt}{2}{|y-x|<1 }{ |y|<|x|}} h(y) \varphi_0^2(y)dy.
\end{align*}
Here, we simply reduced the domain of integration in the second and third lines, and in the fourth line used both the monotonicity of the profiles $f_1$ and $g$, as well as the fact that there exists an Euclidean ball $B_{1/2}$ of radius $1/2$ such that $B_{1/2} \subset \left\{z \in \R^d: |z-x|<1, |z| < |x|\right\}$. 

We only need to estimate the last integral. By the definition of the function $h$ and \eqref{eq:gs}, 
\begin{align*}
\int_{\genfrac{}{}{0pt}{2}{|y-x|<1 }{ |y|<|x|}} h(y) \varphi_0^2(y)dy & \geq C_{10}^{-1} \int_{\genfrac{}{}{0pt}{2}{|y-x|<1}{ |y|<|x|}}\frac{g^2(|y|)}{\eta(|y|)|y|^{d-1}  f_1^2(|y|)} \frac{f_1^2(|y|)}{g^2(|y|)} dy \\ & = C_{10}^{-1} \int_{\genfrac{}{}{0pt}{2}{|y-x|<1}{ |y|<|x|}}\frac{1}{\eta(|y|)|y|^{d-1}} dy > C_{10}^{-1} \frac{1}{\eta(r)r^{d-1}} |B_{1/2}(0)|,
\end{align*}
by the monotonicity of the function $\eta$ and the reduction of the integration domain to a ball $B_{1/2}$, as before.
Since, by \eqref{eq:f_comp} and \eqref{theta-bling}, we have $f_1(\sigma(r))f_1(\sigma(r)-1) \leq C_7 f_1^2(\sigma(r)) \leq  c_2^2 C_7 f_1^2(r)$  (with $c_2$ coming from \eqref{theta-bling}),  it follows that for every $r > \sigma^{-1}(\rho+1)$ and $x \in \R^d$ such that $\sigma(r) < |x| < r$, 
\begin{align}\label{eq:crucial}
Q_t h(x) >  \frac{\widetilde \kappa(t)}{H_t(r)}, 
\end{align}
where $H_t(r):= f^2(r) r^{d-1}\eta(r)  \exp(\widetilde K t g(r))$, and $\widetilde \kappa(t):= c_1 |B_{1/2}(0)|^2  (c_2^2C_7C_{10})^{-1} f_1(1)f_1(2) e^{\lambda_0 t}$. 

We now analyze the invertibility of the function $r \mapsto H_t(r)$ for large values of $r$. We will adjust the argument leading to \eqref{eq:a_b_event_decr}. This function is continuous and satisfies
\begin{align*}
 H_t(r) & = \exp\left(2 \log f(r) + (d-1)\log r + \log \eta(r) + \widetilde K t g(r) \right) \\
        & = \exp\left(-|\log f(r)|\Big(2 - \frac{(d-1)\log r + \log \eta(r)}{|\log f(r)|} - \frac{ \widetilde K  t g(r)}{|\log f(r)|}\Big)\right).
\end{align*}
Since the map $r \mapsto |\log f(r)|$ is eventually strictly increasing by assumption, and 
\[
r \mapsto 2 - \frac{(d-1)\log r + \log \eta(r)}{|\log f(r)|} - \frac{ \widetilde K  t g(r)}{|\log f(r)|}
\]
eventually increases to the positive number
\[
2-\limsup_{r \to \infty} \frac{(d-1)\log r + \log \eta(r)}{|\log f(r)|} 
\]	
by \eqref{eq:decr_to_zero} and \eqref{eq:eta_decr_to_zero}, it follows that the map $r \mapsto H_t(r)$ is eventually strictly decreasing on $(0,\infty)$. Hence, there exists $r_0(t) > \sigma^{-1}(\rho+1)$ such that it is invertible on $[r_0(t),\infty)$ (we take $r_0(t)$ so large that $f_1(r)=f(r)$, for $r \geq r_0$). In particular, we can define:  
\[
  \gamma_t(u) := H_t^{-1} \left(\frac{\widetilde \kappa(t)}{u}\right), \quad u \geq u_0(t):= \frac{\widetilde \kappa(t)}{H_t(r_0(t))}.
\]
Further, we assume that $r = r_t(u) = \gamma_t(u)$, for $u \geq u_0(t)$, so that 
	\begin{align}\label{eq:choice_of_r}
		\frac{\widetilde \kappa(t)}{u} = f_1^2(r) r^{d-1}\eta(r)  \exp(\widetilde K t g(r)).
	\end{align}
Consequently, by \eqref{eq:crucial}, \eqref{eq:choice_of_r}, the lower bound in \eqref{eq:gs} and the monotonicity of $f_1$ and $g$, we get
	\begin{align*}
		  	\mu\big(\{x \in \R^d: Q_t h(x) > u\}\big) & \geq \mu\left(\left\{x \in \R^d: {\sigma(r)} < |x| < r, \frac{\widetilde \kappa(t)}{u} \geq H_t(r)\right\}\right) \\
				& = \mu\left(\left\{x \in \R^d: {\sigma(r)} < |x| < r\right\}\right) \\
				& =  \int_{{\sigma(r)} < |x| < r} \varphi_0^2(x) dx \\
				& \geq c_3 \int_{{\sigma(r)} < |x| < r} \frac{f_1^2(|x|)}{g^2(|x|)}dx  \\
				& \geq c_4 \frac{f_1^2(r)}{g^2(r)} r^{d-1}{(r - \sigma(r))},		
	\end{align*}
	as long as $u \geq u_0(t)$. By \eqref{eq:choice_of_r},
	\[
	\frac{f_1^2(r)}{g^2(r)} r^{d-1} = \frac{\widetilde \kappa(t)}{u} \frac{1}{g^2(\gamma_t(u)) \exp(\widetilde K t g(\gamma_t(u))) \eta(\gamma_t(u))},
	\]
which leads us to a conclusion. 
\end{proof}

\begin{proof}[Proof of Corollary \ref{cor:cor1}]
For any $h \in L^1(\mu)$, $h \geq 0$, and $\lambda>0$ we have
\begin{align*}
	\int_{\R^d} \Phi(h(x)/ \lambda) \mu(dx) = \int_{\R^d} \int_0^{h(x)/\lambda} \Phi^{\prime}(u) du \, \mu (dx)
	                 = \int_0^{\infty} \Phi^{\prime}(u) \mu\left(\left\{x \in \R^d: h(x) > \lambda u \right\}\right)du.
\end{align*}
Since $\mu$ is a probability measure and $\Phi^{\prime}$ is integrable on any interval $[0,a]$, $a>0$, the both assertions (a) and (b) follow directly from the respective estimates of $\mu\left(\left\{x \in \R^d: Q_t h(x) > \lambda u \right\}\right)$ in Theorems \ref{thm:main} and \ref{prop:main3}. 
\end{proof}

We conclude this section by analyzing the behavior of the rate $v_t(\gamma_t(u))$ appearing in Theorem \ref{prop:main3} and Corollary \ref{cor:cor1}, and its exponent. 

\begin{lemma}\label{lemma:lem3}
		Assume that \eqref{A}, \eqref{eq:decr_to_zero} hold and that the potential profile $g$ satisfies condition \eqref{eq:cond_rate_v2_2} with some $\omega \geq 0$. Let $ t >0$ and let $\eta, \sigma:[0,\infty) \to (0,\infty)$ be continuous and increasing functions such that $1/\eta \in L^1((1,\infty),dx)$, $\sigma(r) < r$, for $r \geq 1$, and \eqref{eq:eta_decr_to_zero}, \eqref{theta-bling} hold. Denote
        \[
           b := \lim\limits_{r \to \infty} \frac{(d-1)\log r + \log \eta(r)}{|\log f(r)|} \in [0,2), \quad \text{see \eqref{eq:eta_decr_to_zero}.}
        \]
        Then the following assertions hold. 
        \begin{itemize}
            \item[(a)]
       For every $\varepsilon \in (0,2-b)$ there exists $r_{\varepsilon} >0 $ such that 
	\begin{equation}\label{ieq:inverses_2}
		f^2 (r) \leq H_t (r) \leq f^{2- b -\varepsilon} (r), \quad r > r_{\varepsilon}, 
	\end{equation}
    \begin{align}\label{eq:gfraca}  
	 	1 \leq \liminf_{u \to \infty} \frac{g(\gamma_t(u))}{g(\alpha_t(u))} \leq \limsup_{u \to \infty} \frac{g(\gamma_t(u))}{g(\alpha_t(u))} \leq  \Big(\frac{2}{2-b}\Big)^{\omega}.
	 \end{align}
    and, for every $\delta>0$, we have
	\begin{align}\label{eq:slow_rate_2}
		\lim_{u \to \infty} \frac{\widetilde w_t\big(\gamma_t(u)\big)}{u^{\delta}}= 0.
	\end{align}
 
 	\item[(b)]  If, in addition, 
 	\begin{equation*}
 		\lim\limits_{r \to \infty}\frac{\log \eta(r) - \log(r-\sigma(r))}{g(r)} =0, 
 	\end{equation*}
    then
    \begin{align}
	 	\lim_{u \to \infty} \frac{\log \eta(\gamma_t(u)) - \log(\gamma_t(u)-\sigma(\gamma_t(u)))}{g(\alpha_t(u))} = 0
	 \end{align}
 	and, for any $\delta> 0$,
 	\begin{equation*}
 		\lim_{u \to \infty} \frac{v_t\big(\gamma_t(u)\big)}{u^{\delta}}= 0.
 	\end{equation*}
    \end{itemize}
\end{lemma}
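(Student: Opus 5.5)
The proof will follow the pattern of Lemma \ref{lemma:lem2}, with $G_t$ replaced by $H_t$ and the extra factor $r^{d-1}\eta(r)$ absorbed through \eqref{eq:eta_decr_to_zero}.

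\emph{Step 1: the two-sided bound \eqref{ieq:inverses_2}.} Write
\[
H_t(r) = f^2(r)\exp\big((d-1)\log r + \log\eta(r) + \widetilde K t g(r)\big).
\]
For the lower bound $f^2\le H_t$, it suffices that the exponent is eventually nonnegative. This holds because $g>0$ and $\eta$ is increasing; if $\log\eta$ could be negative, one enlarges $r_\varepsilon$, since $1/\eta\in L^1$ forces $\eta(r)\to\infty$. For the upper bound, fix $\varepsilon\in(0,2-b)$. By \eqref{eq:eta_decr_to_zero}, the ratio $((d-1)\log r+\log\eta(r))/|\log f(r)|$ decreases to $b$. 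By \eqref{eq:decr_to_zero}, $\widetilde K t g(r)/|\log f(r)|\to 0$. Hence for large $r$ the whole exponent is at most $(b+\varepsilon)|\log f(r)|$, which gives $H_t\le f^{2-b-\varepsilon}$. There is one subtlety: since the ratio decreases to $b$, it may exceed $b$ for finite $r$. This is exactly why we must allow the slack $\varepsilon$.

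\emph{Step 2: \eqref{eq:gfraca} and \eqref{eq:slow_rate_2}.} We apply Lemma \ref{lem:lemma} with $F=H_t$, which is eventually decreasing as shown in the proof of Theorem \ref{prop:main3}. We take $a=2-b-\varepsilon\in(0,2)$, $\widetilde\kappa=\widetilde\kappa(t)$ and $m=g$. This yields the $\liminf$ bound $1$ and the $\limsup$ bound $(2/(2-b-\varepsilon))^\omega$ for every $\varepsilon$. Letting $\varepsilon\downarrow0$ gives \eqref{eq:gfraca}.

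For \eqref{eq:slow_rate_2}, we argue as in Lemma \ref{lemma:lem2}(a). For large $u$,
\[
\widetilde w_t(\gamma_t(u))\le \exp\big((\widetilde K t+1)g(\gamma_t(u))\big)\le f(\gamma_t(u))^{-\delta'}
\]
for any prescribed $\delta'>0$, by \eqref{eq:decr_to_zero}. By \eqref{ieq:inverses_2} we have $f(\gamma_t(u))^{a}\ge H_t(\gamma_t(u))=\widetilde\kappa/u$. Therefore $f(\gamma_t(u))^{-\delta'}\le (u/\widetilde\kappa)^{\delta'/a}$. Choosing $\delta'<a\delta$ then gives the limit $0$.

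\emph{Step 3: part (b).} Put $E(r):=\log\eta(r)-\log(r-\sigma(r))$. Then
\[
\frac{E(\gamma_t(u))}{g(\alpha_t(u))}=\frac{E(\gamma_t(u))}{g(\gamma_t(u))}\cdot\frac{g(\gamma_t(u))}{g(\alpha_t(u))}.
\]
The first factor tends to $0$ by hypothesis, since $\gamma_t(u)\to\infty$. The second factor is bounded by \eqref{eq:gfraca}. This proves the first limit of (b).

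Finally, $v_t(\gamma_t(u))=\widetilde w_t(\gamma_t(u))e^{E(\gamma_t(u))}$. By the hypothesis, eventually $E\le g$. Therefore
\[
v_t(\gamma_t(u))\le g^2(\gamma_t(u))\exp\big((\widetilde Kt+1)g(\gamma_t(u))\big),
\]
and the argument of Step 2, with $\widetilde K t$ replaced by $\widetilde K t+1$, gives $v_t(\gamma_t(u))/u^\delta\to0$.

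The main obstacle is Step 1. The constant must be handled correctly, which means using $2-b-\varepsilon$ rather than $2-b$ because of the eventual-decrease (not increase) direction. One must also ensure that $H_t\ge f^2$, which relies on eventual positivity of $\log\eta$. Everything else is a direct reuse of Lemmas \ref{lem:lemma} and \ref{lemma:lem2}.
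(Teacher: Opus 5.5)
Your proof is correct and takes essentially the paper's route: you bound $H_t$ from both sides via \eqref{eq:eta_decr_to_zero} and \eqref{eq:decr_to_zero}, then apply Lemma \ref{lem:lemma} with $F=H_t$, $m=g$, $a=2-b-\varepsilon$. The only deviations are minor and equally valid: you derive \eqref{eq:slow_rate_2} directly from \eqref{ieq:inverses_2}, rather than from \eqref{eq:gfraca} combined with Lemma \ref{lemma:lem2}(a), and in (b) you use $\log\eta(r)-\log(r-\sigma(r))\le g(r)$ eventually, instead of the paper's bound $v_t\le \widetilde w_{2t}\le \widetilde w_t^2$.
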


\begin{proof}
	Fix $ t >0$. We first show (a). Let $\varepsilon \in (0,2-b)$. 
    By \eqref{eq:eta_decr_to_zero} there exists $r_{\varepsilon}>0$ such that 
	\begin{equation*}
		r^{d-1} \eta(r) \leq f(r)^{-(b+\varepsilon/2)}, \quad r > r_{\varepsilon}.
	\end{equation*}
	Using this inequality and the argument leading to \eqref{eq:exponent}, we get
	\begin{equation*}
		f^2 (r) \leq H_t(r)= f^2(r) r^{d-1}\eta(r) \exp(\widetilde K t g(r)) \leq f^{2 - b -  \varepsilon} (r), \quad r > r_{\varepsilon},
	\end{equation*}
    for sufficiently large $r_{\varepsilon} >0$. This is the first assertion of Part (a).
    
      In order to see \eqref{eq:gfraca}, we just apply Lemma \ref{lem:lemma} with $F=H_t$, $\widetilde \kappa = \widetilde \kappa(t)$ (it is allowed due to the inequalities \eqref{ieq:inverses_2} established above) and $m=g$ (here $\tau(u) = \gamma_t(u))$.
    
      The last assertion \eqref{eq:slow_rate_2} follows directly from \eqref{eq:gfraca} and $\lim_{u \to \infty} (w_t\big(\alpha_t(u)\big)/u^{\delta})  = 0$, which was established in Lemma \ref{lemma:lem2} (a). 

 The first assertion of (b) is a straightforward consequence of the assumption and \eqref{eq:gfraca}. The second assertion then follows from \eqref{eq:slow_rate_2} and the following estimate
    \[
0 \leq v_t(r) = g^2(r) \exp\left(\widetilde K t g(r)\left(1+\frac{\log \eta(r) - \log(r - \sigma(r) )}{\widetilde K t g(r)}\right)\right) 
  \leq \widetilde w_{2t}(r) \leq \widetilde w_{t}^2(r) ,
    \]
 which is valid for sufficiently large $r>0$.  
    
\end{proof}

\section{Applications and examples} \label{sec:ex}

We now illustrate our results from Theorems \ref{thm:main} and \ref{prop:main2} through several examples of ground state transformed semigroups $\{Q_t:t \geq 0\}$ associated with non-local Schr\"odinger operators of the form $H=-L+V$, where the kinetic term $L$ is taken to be either the  \emph{fractional Laplacian} $L=-(-\Delta)^a$, or the \emph{fractional relativistic Laplacian} $L=-(-\Delta+m^{1/a})^a+m$, with $a \in (0,1)$ and $m>0$. 
We also apply Corollary \ref{cor:cor1} to characterize the range $Q_t \big(L^1(\mu)\big)$ in terms of Orlicz spaces. For each operator $L$, we consider two distinct families of confining potentials $V$, which lead to fundamentally different decay rates and associated Young functions. 

In this connection, we refer to the recent work of Roberto and Zegarli\'nski \cite{RZ}, where Orlicz hypercontractivity is investigated for a broad class of diffusion semigroups (see also the earlier contribution \cite{BCR}). We further note that Young functions and Orlicz spaces have been previously studied in the setting of certain non-local operators, Dirichlet forms, and jump processes; see, for example, Bogdan, Kutek, and Pietruska-Pa{\l}uba \cite{BKPP}, Schilling and Wang \cite{bib:SchW}, Wang \cite{Wang}, and Wang and Wang \cite{WangWang}. These works focus on different objectives, but they also offer additional motivation for the applicability of our results.

\subsection{Intrinsic semigroups associated with fractional Schr\"odinger operators}

Let 
\[
L=-(-\Delta)^a, \quad a \in (0,1).
\]
We know that $\nu(dx) = \nu(x) dx$, with $\nu(x) = c_{d,a} |x|^{-d-2a}$, in the integral representation \eqref{eq:symbol}. Clearly, the profile of the L\'evy density is given by $f(r) =  r^{-d-2a}$ which satisfies \eqref{A1} \cite[Lemma 3.2(a)]{KSch}. Moreover, it is known that 
\[
p_t(x) \asymp t^{-d/(2a)} \wedge t |x|^{-d-2a}, \quad t>0, \ x \neq 0, 
\]
which in particular implies \eqref{A2}. 

Clearly, 
\[f^{-1}(s) = s^{-\frac{1}{d+2a}}\] 
and
\begin{equation*}
		\alpha_t(u) = (f^2)^{-1} \Big(\frac{\kappa(t)}{u}\Big) = f^{-1} \Big(\Big(\frac{\kappa(t)}{u}\Big)^{\frac{1}{2}}\Big) = \Big(\frac{\kappa(t)}{u} \Big)^{- \frac{1}{2(d+2a)}}
	\end{equation*}
for a suitable constant $\kappa(t)>0$. 

In Examples \eqref{ex:ex1} and \eqref{ex:ex2} we discuss two types of confining potentials which evidently satisfy \eqref{A3}.
	
\begin{example}[\textit{Power-logarithmic potentials}] \label{ex:ex1}
Let
\[
V(x) = 0 \vee \log^{ \theta}|x|, \quad \theta > 0.
\]
We choose the potential profile $g$ such that $g(r) = \log^{\theta} r$ for $r \geq e$.

\medskip

\noindent
\textbf{Case 1:}\ $\theta \geq 1$ $\Leftrightarrow$ $\exists C$ $g(r) \geq C|\log f(r)|$, $r \geq e$ $\Leftrightarrow$ $\{Q_t:t \geq 0\}$ is asymptotically ultracontractive/hypercontractive, see e.g.\ \cite[Corollary 3.3]{bib:KKL2018} or \cite{bib:ChW2015}; 

\medskip

\noindent
\textbf{Case 2:}\ $\theta < 1$ $\Leftrightarrow$ condition \eqref{eq:decr_to_zero} is satisfied. In this case, the following assertions hold:

\begin{itemize}
\item The estimates in Theorems \ref{thm:main} and \ref{prop:main2} hold with 
\[
w_t(\alpha_t(u)) = g^2(\alpha_t(u)) \cdot \exp\big(K t g(\alpha_t(u))\big) 
\]
and 
\[
\widetilde w_t(\beta_t(u)) = g^2(\beta_t(u)) \cdot \exp\left(\widetilde K t g(\beta_t(u))\right), 
\]
where
\begin{align} \label{eq:border_frac}
g(\alpha_t(u)) \approx g(\beta_t(u)) \approx \left(\frac{1}{2(d+2a)} \log u\right)^{\theta}, \quad \text{as} \ u \to \infty, \ \text{for every fixed} \ t > 0. 
\end{align}
According to the discussion at the end of Section \ref{sec:sharp}, 
a (far from optimal) estimate for the constants $K,\widetilde K$ is $K^{-1} = \widetilde K = 4\log^{2\theta}(1+e)$.

\item Let $c>0$ and $\Phi(u)=|u| \exp\left(c \log^{\theta} (e+|u|)\right)$ be a Young function. It follows from Corollary \ref{cor:cor1} that:
\begin{align} \label{eq:fract_orlicz_1}
 \text{if $c < Kt/(2d+4a)^{\theta}$, then $Q_t$  maps  $L^1(\mu)$  continuously into  $\mL^{\Phi}(\mu)$;}
\end{align}
\begin{align} \label{eq:fract_orlicz_2}
\text{if $c >  \widetilde Kt/(d+4a)^{\theta}$, then $Q_t \big(L^1(\mu)\big) \not \subset \mL^{\Phi}(\mu)$.} 
\end{align}
\end{itemize}

\medskip
\noindent 
\underline{Proof of \eqref{eq:border_frac}:}
Indeed, we have
	\begin{equation*}
		g(f^{-1}(s)) = \Bigg(-\frac{1}{d+2a} \log s\Bigg)^{\theta},
	\end{equation*}
for $s$ small enough, and
	\begin{equation} \label{eq:fract_alpha}
		g(\alpha_t (u)) = \log^{\theta}\!\alpha_t(u) = \Bigg(\frac{1}{2(d+2a)} \Big( \log u - \log \kappa(t) \Big)\Bigg)^{\theta} \approx \Bigg(\frac{1}{2(d+2a)} \log u\Bigg)^{\theta}.
	\end{equation}
Moreover, for every $c>0$ and $\lambda \geq 1$, 
	\begin{equation*}
		\frac{g(f^{-1}(c s^{\lambda}))}{g(f^{-1}(s))} = \Bigg(\frac{ \log c + \lambda \log s }{\log s}\Bigg)^{\theta} \xrightarrow{ s \to 0^+} \lambda^{\theta},
	\end{equation*}
which means that \eqref{eq:cond_rate_v2_2} with $m=g$ holds true. Consequently, by Lemma \ref{lemma:lem2}(b),
	\begin{equation*}
		g(\beta_t (u)) = \log^{\theta}\!\beta_t(u) \approx \Bigg(\frac{1}{2(d+2a)} \log u \Bigg)^{\theta}.
	\end{equation*}
This completes the proof of \eqref{eq:border_frac}.

\medskip
\noindent 
\underline{Proof of \eqref{eq:fract_orlicz_1}:}   
We apply Corollary \ref{cor:cor1}(a) in combination with \eqref{eq:fract_alpha} and the asymptotic estimate $\Phi^{\prime}(u) \approx  \exp\left(c \log^{\theta} |u|\right)$.  

\medskip
\noindent 
\underline{Proof of \eqref{eq:fract_orlicz_2}:} First, we choose the function $\eta$ in such a way that $\eta(r) = r \log^2 r$ for sufficiently large $r$, and define $\gamma_t(u)$ and $v_t(\gamma_t(u))$ as in Theorem \ref{prop:main3}. As we already know, the profile $g$ satisfies \eqref{eq:cond_rate_v2_2} and, consequently, Lemma \ref{lemma:lem3}(a) implies that 
\begin{align} \label{eq:aux_ex1}
	 	1 \leq \liminf_{u \to \infty} \frac{\log^{\theta}\gamma_t(u)}{\log^{\theta}\alpha_t(u)} \leq \limsup_{u \to \infty} \frac{\log^{\theta}\gamma_t(u)}{\log^{\theta}\alpha_t(u)} \leq  \Big(\frac{2d+4a}{d+4a}  \Big)^{\theta}.
\end{align}
Observe that $f$ satisfies \eqref{theta-bling} with $\sigma(r) = \frac{r}{2}$. In particular,
\[
\frac{\eta(r)}{r-\sigma(r)} = 2\log^2 r, \quad \text{for $r$ big enough,}
\]
which means that
\[
\frac{\eta(\gamma_t(u))}{\gamma_t(u)-\sigma(\gamma_t(u))} =2\log^2\gamma_t(u), \quad \text{for large $u$.}
\]
This shows that
\[
v_t(\gamma_t(u)) =  \exp\big(\widetilde K t \log^{\theta}\gamma_t(u)+(2\theta+2)\log \log\gamma_t(u)+ \log 2\big),
\]
for sufficiently large $u$. By combining this with \eqref{eq:aux_ex1} and \eqref{eq:border_frac}, we get
\[
v_t(\gamma_t(u)) \leq  \exp\big((1+o(1))\widetilde K t (d+4a)^{-\theta} \log^{\theta} u \big), \quad \text{as} \quad u \to \infty.
\]
Since $\Phi^{\prime}(u) \approx  \exp\left(c \log^{\theta} |u|\right)$, the assertion \eqref{eq:fract_orlicz_2} follows from Corollary \ref{cor:cor1}(b). 
\end{example} 

\begin{example}[\textit{Power-iterated-logarithmic potential}] \label{ex:ex2}
Let
\[
V(x) = 0 \vee \log^{\theta} \log|x|, \quad \theta > 0.
\]
We choose $g$ such that $g(r) = \log^{\theta} \log r$ for $r \geq e^e$. It is clear that $g$ satisfies \eqref{eq:decr_to_zero} for every $\theta>0$. The constants $K,\widetilde K$ can be chosen such that $K^{-1} = \widetilde K = 4\log^{2\theta} \log(1+e^e)$.
\begin{itemize}
\item 
The rates $w_t(\alpha_t(u))$ and $\widetilde w_t(\beta_t(u))$ in Theorems \ref{thm:main} and \ref{prop:main2} are governed by 
	\begin{align} \label{eq:loglog_ex}
		g(\alpha_t (u)) \approx g(\beta_t (u)) \approx \log^{\theta}\log u, \quad \text{as} \ u \to \infty, \ \text{for every fixed} \ t > 0.
	\end{align}
\item Let $c>0$ and let $\Phi(u)=|u| \exp\left(c \log^{\theta} \log (e^e+|u|)\right)$ be a Young function. An application of Corollary \ref{cor:cor1} yields the following two observations:

\smallskip
\noindent
\textbf{Case 1:}\ $\theta>1$

    \begin{align} \label{eq:fract_orlicz_32}
 \text{if $c < Kt$, then $Q_t$  maps  $L^1(\mu)$  continuously into  $\mL^{\Phi}(\mu)$;}
\end{align}
\begin{align} \label{eq:fract_orlicz_42}
\text{if $c > \widetilde Kt$, then $Q_t \big(L^1(\mu)\big) \not \subset \mL^{\Phi}(\mu)$.}
\end{align}

\smallskip
\noindent
\textbf{Case 2:}\ $\theta =1$

    \begin{align} \label{eq:fract_orlicz_3}
 \text{if $c < Kt -1$, then $Q_t$  maps  $L^1(\mu)$  continuously into  $\mL^{\Phi}(\mu)$;}
\end{align}
\begin{align} \label{eq:fract_orlicz_4}
\text{if $c > \widetilde Kt+1$, then $Q_t \big(L^1(\mu)\big) \not \subset \mL^{\Phi}(\mu)$.}
\end{align}

\smallskip
\noindent
For $\theta < 1$, the upper bound in Theorem~\ref{thm:main} is not integrable, and hence yields no information on the range $Q_t\big(L^1(\mu)\big)$.

\end{itemize}

\medskip
\noindent 
\underline{Proof of \eqref{eq:loglog_ex}:}
We have
    \begin{equation*}
		g(\alpha_t (u)) = \left(\log\Big(\log u  - \log \kappa(t) \Big) + \log(\tfrac{1}{2d+4a})\right)^{\theta} \approx \log^{\theta}\log u.
	\end{equation*}
Moreover, the condition \eqref{eq:cond_rate_v2_2} is satisfied with $m=g$:\ for every $\theta >0$ and $\lambda \geq 1$ we have
	\begin{equation*}
		\frac{g(f^{-1}(c s^{\lambda}))}{g(f^{-1}(s))} = \left(\frac{ \log\big(-\log s - \tfrac{1}{\lambda}\log c\big)   + \log(\tfrac{\lambda}{d+2a}) }{\log(-\log s) + \log(\tfrac{1}{d+2a}) }\right)^{\theta} \xrightarrow{ s \to 0^+} 1 .
	\end{equation*}
Hence, by Lemma \ref{lemma:lem2}(b), we have $g(\beta_t (u)) \approx \log^{\theta}\log u$, which completes the proof of \eqref{eq:loglog_ex}.

\medskip
\noindent 
\underline{Proof of \eqref{eq:fract_orlicz_32} and \eqref{eq:fract_orlicz_3}:} Similarly as in the previous example, this follows from Corollary \ref{cor:cor1}(a), \eqref{eq:loglog_ex} and the asymptotic estimate $\Phi^{\prime}(u) \approx  \exp\left(c \log^{\theta} \log |u|\right)$. 

\medskip
\noindent 
\underline{Proof of \eqref{eq:fract_orlicz_42} and \eqref{eq:fract_orlicz_4}:}
Here, it is convenient to choose the function $\eta$ in such a way that $\eta(r) = \eta(r) = r \log r \cdot \log^2 \log r$, for sufficiently large $r$. Recall that $\gamma_t(u)$ and $v_t(\gamma_t(u))$ are defined in Theorem \ref{prop:main3}. As pointed out above (see the proof of \eqref{eq:border_frac}), the condition \eqref{eq:cond_rate_v2_2} is satisfied with $m(s)= \log s$. Therefore, by applying Lemma \ref{lem:lemma} with $F=H_t$, we get 
\begin{align*}
	 	1 \leq \liminf_{u \to \infty} \frac{\log \gamma_t(u)}{\log\alpha_t(u)} \leq \limsup_{u \to \infty} \frac{\log\gamma_t(u)}{\log\alpha_t(u)} \leq  2.
\end{align*}
Consequently, by \eqref{eq:loglog_ex}, 
\begin{align}\label{ex2:aux}
    g(\gamma_t(u)) = \log^{\theta} \log \gamma_t(u) \approx \log^{\theta} \log u, \quad \text{as} \ u \to \infty.
\end{align}
Since \eqref{theta-bling} holds with $\sigma(r) = \frac{r}{2}$, we have
\[
\frac{\eta(r)}{r-\sigma(r)} = 2\log r \log^2 \log r, \quad \text{for large $r$,}
\]
that is
\[
\frac{\eta(\gamma_t(u))}{\gamma_t(u)-\sigma(\gamma_t(u))} =2\log \gamma_t(u) \log^2 \log \gamma_t(u), \quad \text{for large $u$.}
\]
Hence
\[
v_t(\gamma_t(u)) =  \exp\big(\widetilde K t \log^{\theta} \log \gamma_t(u)+(2\theta+2)\log \log \log\gamma_t(u) + \log \log \gamma_t(u) +\log 2\big),
\]
for sufficiently large $u$ and, by \eqref{ex2:aux}, 
\[
v_t(\gamma_t(u)) \leq  \exp\big((1+o(1))\widetilde K t \log^{\theta} \log u\big), \quad \text{as} \quad u \to \infty,
\]
whenever $\theta>1$, and
\[
v_t(\gamma_t(u)) \leq  \exp\big((1+o(1))(\widetilde K t+1) \log \log u\big), \quad \text{as} \quad u \to \infty,
\]
whenever $\theta = 1$. 

Now, as $\Phi^{\prime}(u) \approx  \exp\left(c \log^{\theta} \log |u|\right)$, the assertions  \eqref{eq:fract_orlicz_42} and \eqref{eq:fract_orlicz_4} follow from Corollary \ref{cor:cor1}(b). 
\end{example}

\subsection{Intrinsic semigroups associated with fractional relativistic Schr\"odinger operators}
Let 
\[
L=-(-\Delta+m^{1/a})^a+m, \quad a \in (0,1), \ \ m>0.
\]
One has $\nu(dx) = \nu(x) dx$ in \eqref{eq:symbol}, where
\begin{align*}
	\nu(x) &= \frac{a}{(4\pi)^{d/2}\Gamma\left(1-a\right)} \int_0^{\infty} \exp\left(-\frac{|x|^2}{4u} - m^{\frac 1a} u\right) u^{-1-\frac{d+2a}{2}} \,du \\
    &= \frac{a 2^{1+a-\frac{d}{2}} m^{\frac{d+2a}{4a}}}{\pi^{\frac{d}{2}}\Gamma\left(1-a\right)} \frac{K_{\frac{d+2a}{2}}\left(m^{\frac{1}{2a}}|x|\right)}{|x|^{\frac{d+2a}{2}}},
\end{align*} 
where
\begin{gather*}
	K_{\mu}(r) 
	= \frac{1}{2} \left(\frac{r}{2}\right)^{\mu} \int_0^{\infty} u^{-\mu-1} \exp\left(-u-\frac{r^2}{4u}\right) du, 
	\quad \mu>0,\; r>0,
\end{gather*} 
is the modified Bessel function of the second kind, see e.g.\ \cite[10.32.10]{NIST}. Using the asymptotics (see \cite[10.25.3 and 10.30.2]{NIST})
\begin{align*}
	\lim_{r \to \infty} K_{\mu}(r) \sqrt{r} e^r  = \sqrt{\pi/2}, 
	\qquad \lim_{r \to 0} K_{\mu}(r) r^{\mu}  = 2^{\mu-1} \Gamma(\mu),
\end{align*}
we can show that the profile $f$ of the L\'evy density is given by
\begin{align*}
	f(r) = \left(\I_{[0,1]}(r) r^{-d-2a} + \I_{(1,\infty)}(r) r^{-(d+2a+1)/2} \right)\exp\left(-m^{\frac{1}{2a}}r\right).
\end{align*}
A concise verification of \eqref{A1} is given in \cite[Section~5.5]{KSch}, while \eqref{A2} follows directly from \cite[Theorem~2]{KSz}.

Observe that the function $p(s) = -(1/m^{\frac{1}{2a}}) \log s$ serves as a left asymptotic inverse of $f$. Indeed, 
	\begin{align*}
		\frac{p(f(r))}{r}  = \frac{-1}{m^{\frac{1}{2a}}r} \log(e^{-m^{\frac{1}{2a}}r} r^{-(d+2a+1)/2})
		  = \frac{m^{\frac{1}{2a}} r + \frac{d+2a+1}{2} \log r}{m^{\frac{1}{2a}}r}  \xrightarrow{r \to \infty} 1.
	\end{align*}
	In particular,
	\begin{equation}\label{eq:asymp_of_p}
		\lim\limits_{s \to 0^+}\frac{p(s)}{f^{-1}(s)} = \lim\limits_{r \to \infty}\frac{p(f(r))}{f^{-1}(f(r))} = \lim\limits_{r \to \infty}\frac{p(f(r))}{r} = 1.
	\end{equation} 
	This implies that
	\[
		\alpha_t(u) =  f^{-1} \Big(\Big(\frac{\kappa(t)}{u}\Big)^{\frac{1}{2}}\Big)  \approx p \Big(\Big(\frac{\kappa(t)}{u}\Big)^{\frac{1}{2}}\Big) \approx \frac{1}{2m^{1/(2a)}} \log u,
	\]
	for a suitable constant $\kappa(t)>0$, for every fixed $t>0$. Similarly to the fractional Laplacian case, in Examples \eqref{ex:ex3} and \eqref{ex:ex4} we analyze two different classes of confining potentials that satisfy \eqref{A3}.

\begin{example}[\textit{Power-type potential}] \label{ex:ex3}
Let
\[
V(x) = |x|^{\theta}, \quad \theta > 0.
\]
We take the potential profile $g$ such that $g(r) = r^{\theta}$ for $r\geq 1$. We have two cases:

\medskip

\noindent
\textbf{Case 1:}\ $\theta \geq 1$ $\Leftrightarrow$ $\exists C$ $g(r) \geq C|\log f(r)|$, $r \geq 1$ $\Leftrightarrow$ $\{Q_t:t \geq 0\}$ is asymptotically ultracontractive/hypercontractive, see e.g.\ \cite[Corollary 3.3]{bib:KKL2018} or \cite{bib:ChW2015}; 

\medskip

\noindent
\textbf{Case 2:}\ $\theta < 1$ $\Leftrightarrow$ condition \eqref{eq:decr_to_zero} is satisfied. In this case, the following assertions hold:

\begin{itemize}
\item The estimates in Theorems \ref{thm:main} and \ref{prop:main2} hold with 
\[
w_t(\alpha_t(u)) = g^2(\alpha_t(u)) \cdot \exp\big(K t g(\alpha_t(u))\big) 
\]
and 
\[
\widetilde w_t(\beta_t(u)) = g^2(\beta_t(u)) \cdot \exp\left(\widetilde K t g(\beta_t(u))\right), 
\]
where
\begin{align} \label{eq:border_relativ}
g(\alpha_t(u)) \approx g(\beta_t(u)) \approx \left(\frac{1}{2m^{1/(2a)}} \log u\right)^{\theta}, \quad \text{as} \ u \to \infty, \ \text{for every fixed} \ t > 0. 
\end{align}
According to the discussion at the end of Section \ref{sec:sharp}, the constants $K,\widetilde K$ may be chosen such that $K^{-1} = \widetilde K = 4^{1+\theta}$.

\item Let $c>0$ and $\Phi(u)=|u| \exp\left(c \log^{\theta} (e+|u|)\right)$ be a Young function. By Corollary \ref{cor:cor1}:
\begin{align} \label{eq:fract_orlicz_5}
 \text{if $c < K t /(2m^{\frac{1}{2a}})^{\theta}$, then $Q_t$  maps  $L^1(\mu)$  continuously into  $\mL^{\Phi}(\mu)$;}
\end{align}
\begin{align} \label{eq:fract_orlicz_6}
\text{if $c >  \widetilde K t /(2m^{\frac{1}{2a}})^{\theta}$, then $Q_t \big(L^1(\mu)\big) \not \subset \mL^{\Phi}(\mu)$.} 
\end{align}
\end{itemize}

\medskip
\noindent 
\underline{Proof of \eqref{eq:border_relativ}:}
Clearly, 
	\begin{equation*}
	g(\alpha_t(u)) = \alpha_t^{\theta}(u) \approx \left(\frac{1}{2m^{1/(2a)}} \log u \right)^{\theta}.
	\end{equation*}
	Moreover, for every $c>0$ and  $\lambda \geq 1$, we get
	\begin{equation*}
		\lim\limits_{s \to 0^+}\ \frac{g(p(c s^{\lambda}))}{g(p(s))} = \lim\limits_{s \to 0^+}\ \Bigg(\frac{ \log c + \lambda \log s }{\log s}\Bigg)^{\theta} = \lambda^{\theta}.
	\end{equation*}
	Combining this with \eqref{eq:asymp_of_p}, we get
		\begin{equation*}
		\lim\limits_{s \to 0^+}\frac{g(f^{-1}(c s^{\lambda}))}{g(f^{-1}(s))} 
		= \lim\limits_{s \to 0^+} \left(\frac{f^{-1}(c s^{\lambda})}{p(c s^{\lambda})}\right)^{\theta}
		\left(\frac{p(s)}{f^{-1}(s)}\right)^{\theta} \frac{g(p(c s^{\lambda}))}{g(p(s))} =  \lambda^{\theta},
	\end{equation*}
	which shows that $g$ satisfies \eqref{eq:cond_rate_v2_2}. Hence, by Lemma \ref{lemma:lem2}(b), 
	\begin{equation*}
		g(\beta_t(u)) = \beta_t^{\theta} (u) \approx  \left(\frac{1}{2m^{1/(2a)}} \log u \right)^{\theta}.
	\end{equation*}

\medskip
\noindent 
\underline{Proof of \eqref{eq:fract_orlicz_5}:}   
As before, we apply Corollary \ref{cor:cor1}(a), \eqref{eq:border_relativ} and the asymptotic estimate $\Phi^{\prime}(u) \approx  \exp\left(c \log^{\theta} |u|\right)$.  

\medskip
\noindent 
\underline{Proof of \eqref{eq:fract_orlicz_6}:} 
We choose the function $\eta$ in such a way that $\eta(r) = r^2$, for sufficiently large $r$, and define $\gamma_t(u)$ and $v_t(\gamma_t(u))$ as in Theorem \ref{prop:main3}. The profile $g$ satisfies \eqref{eq:cond_rate_v2_2} and, consequently, Lemma \ref{lemma:lem3}(a) (where $b=0$) implies that 
\begin{align} \label{eq:aux_ex1b}
	 	g(\gamma_t(u)) \approx g(\alpha_t(u)) 
                    \approx  \left(\frac{1}{2m^{1/(2a)}} \log u \right)^{\theta}.
\end{align}
Moreover, we take $\sigma(r) = r-1$ in \eqref{theta-bling}, getting
\[
\frac{\eta(r)}{r-\sigma(r)} =r^2, \quad \text{for large $r$,}
\]
which implies that
\[
\frac{\eta(\gamma_t(u))}{\gamma_t(u)-\sigma(\gamma_t(u))} =\gamma^2_t(u), \quad \text{for large $u$.}
\]
Consequently, 
\[
v_t(\gamma_t(u)) =  \exp\big(\widetilde K t \gamma^{\theta}_t(u)+(2\theta+2)\log \gamma_t(u)\big),
\]
for sufficiently large $u$. Finally, by \eqref{eq:aux_ex1b}, we get
\[
v_t(\gamma_t(u)) \leq  \exp\big((1+o(1))\widetilde K t (2m^{\frac{1}{2a}})^{-\theta} \log^{\theta} u \big), \quad \text{as} \quad u \to \infty.
\]
Since $\Phi^{\prime}(u) \approx  \exp\left(c \log^{\theta} |u|\right)$, the assertion \eqref{eq:fract_orlicz_6} follows from Corollary \ref{cor:cor1}(b). 
\end{example}

\begin{example}[\textit{Power-logarithmic potential}] \label{ex:ex4}
Let
\[
V(x) = 0 \vee \log^{\theta}|x|, \quad \theta > 0.
\]
Let $g(r) = \log^{\theta} r$ for $r \geq e$. Then $g$ satisfies \eqref{eq:decr_to_zero} for every $\theta>0$. The constants $K,\widetilde K$ can be chosen such that $K^{-1} = \widetilde K = 4\log^{2\theta}(1+e)$. 

Let us summarize our findings:
\begin{itemize}
\item 
The rates $w_t(\alpha_t(u))$ and $\widetilde w_t(\beta_t(u))$ in Theorems \ref{thm:main} and \ref{prop:main2} are governed by 
	\begin{align} \label{eq:log_ex_last}
		g(\alpha_t (u)) \approx g(\beta_t (u)) \approx \log^{\theta}\log u, \quad \text{as} \ u \to \infty, \ \text{for every fixed} \ t > 0.
	\end{align}
\item Let $c>0$ and let $\Phi(u)=|u| \exp\left(c \log^{\theta} \log (e^e+|u|)\right)$ be a Young function. By Corollary \ref{cor:cor1} we get the following two cases:

\smallskip
\noindent
\textbf{Case 1:}\ $\theta>1$

    \begin{align} \label{eq:fract_orlicz_7}
 \text{if $c < Kt$, then $Q_t$  maps  $L^1(\mu)$  continuously into  $\mL^{\Phi}(\mu)$;}
\end{align}
\begin{align} \label{eq:fract_orlicz_8}
\text{if $c > \widetilde Kt$, then $Q_t \big(L^1(\mu)\big) \not \subset \mL^{\Phi}(\mu)$.}
\end{align}

\smallskip
\noindent
\textbf{Case 2:}\ $\theta =1$

    \begin{align} \label{eq:fract_orlicz_72}
 \text{if $c < Kt -1$, then $Q_t$  maps  $L^1(\mu)$  continuously into  $\mL^{\Phi}(\mu)$;}
\end{align}
\begin{align} \label{eq:fract_orlicz_82}
\text{if $c > \widetilde Kt+1$, then $Q_t \big(L^1(\mu)\big) \not \subset \mL^{\Phi}(\mu)$.}
\end{align}

\smallskip
\noindent
For $\theta < 1$, the upper bound in Theorem~\ref{thm:main} is not integrable. In this case, it provides no information on the range $Q_t\big(L^1(\mu)\big)$.

\end{itemize}

\medskip
\noindent 
\underline{Proof of \eqref{eq:log_ex_last}:}
By \eqref{eq:asymp_of_p}, 
	\begin{equation*}
		\lim\limits_{s \to 0^+} \frac{g(p(s))}{g(f^{-1}(s))} = \lim\limits_{s \to 0^+} \frac{g\Big(f^{-1}(s)\frac{p(s)}{f^{-1}(s)}\Big) }{g(f^{-1}(s))} = 
        \lim\limits_{s \to 0^+} \left(\frac{\log(f^{-1}(s)) + \log \Big( \frac{p(s)}{f^{-1}(s)}\Big)}{\log(f^{-1}(s))}\right)^{\theta} = 1 .
	\end{equation*}
	Therefore, 
		\begin{equation*}
		g(\alpha_t (u)) = g\left(f^{-1} \Big(\Big(\frac{\kappa(t)}{u}\Big)^{\frac{1}{2}}\Big)\right)  \approx g \left(p \Big(\Big(\frac{\kappa(t)}{u}\Big)^{\frac{1}{2}}\Big)\right)  \approx  \log^{\theta}\log u,
	\end{equation*}
	and, for every $c >0$ and $\lambda \geq 1$,
	\begin{align*}
		\lim\limits_{s \to 0^+}\frac{g(f^{-1}(c s^{\lambda}))}{g(f^{-1}(s))} = \lim\limits_{s \to 0^+}\frac{g(p(c s^{\lambda}))}{g(p(s))} = \lim\limits_{s \to 0^+} \Bigg(\frac{ \frac{1}{2a}\log\frac{1}{m} + \log(-\log s - \tfrac{1}{\lambda}\log c) + \log \lambda}{\frac{1}{2a}\log\frac{1}{m} + \log(-\log s)}\Bigg)^{\theta} = 1,
	\end{align*}
    i.e.\ $m=g$ satifies the condition \eqref{eq:cond_rate_v2_2}.
By Lemma \ref{lemma:lem2}(b), we get
	\begin{equation*}
		 g(\beta_t (u)) \approx \log^{\theta} \log u.
	\end{equation*}

\medskip
\noindent 
\underline{Proof of \eqref{eq:fract_orlicz_7} and \eqref{eq:fract_orlicz_72}:} As before, this follows directly from Corollary \ref{cor:cor1}(a), \eqref{eq:log_ex_last} and the asymptotic estimate $\Phi^{\prime}(u) \approx  \exp\left(c \log^{\theta} \log |u|\right)$. 

\medskip
\noindent 
\underline{Proof of \eqref{eq:fract_orlicz_8} and \eqref{eq:fract_orlicz_82}:}
We take the function $\eta$ in such a way that $\eta(r) = r \log^2 r$, for sufficiently large $r$, and recall that $\gamma_t(u)$ and $v_t(\gamma_t(u))$ are defined in Theorem \ref{prop:main3}. By following the proof of \eqref{eq:fract_orlicz_42} and \eqref{eq:fract_orlicz_4}
of Example \ref{ex:ex2}, we can easily get
\begin{align}\label{ex4:aux4}
    g(\gamma_t(u)) = \log^{\theta} \gamma_t(u) \approx \log^{\theta} \log u, \quad \text{as} \ u \to \infty.
\end{align}
By taking $\sigma(r) = r-1$ as in the previous example, for large $r$, we obtain
\[
\frac{\eta(r)}{r-\sigma(r)} =r \log^2 r,
\]
and further, for large $u$, we have
\[
\frac{\eta(\gamma_t(u))}{\gamma_t(u)-\sigma(\gamma_t(u))} =\gamma_t(u) \log^2 \gamma_t(u).
\]
This implies that
\[
v_t(\gamma_t(u)) = \exp\big(\widetilde K t \log^{\theta} \gamma_t(u)+(2\theta+2)\log \log\gamma_t(u) + \log \gamma_t(u) \big),
\]
for $u$ large enough. Hence, by \eqref{ex4:aux4}, 
\[
v_t(\gamma_t(u)) \leq  \exp\big((1+o(1))\widetilde K t \log^{\theta} \log u\big), \quad \text{as} \quad u \to \infty,
\]
whenever $\theta>1$, and
\[
v_t(\gamma_t(u)) \leq  \exp\big((1+o(1))(\widetilde K t+1) \log \log u\big), \quad \text{as} \quad u \to \infty,
\]
whenever $\theta = 1$. 

Using the asymptotic estimate $\Phi^{\prime}(u) \approx  \exp\left(c \log^{\theta} \log( |u|)\right)$, we can see that\eqref{eq:fract_orlicz_8} and \eqref{eq:fract_orlicz_82} follow from Corollary \ref{cor:cor1}(b). 
\end{example}

\subsection*{Declaration of competing interest}

We declare no conflicts of interest.

\subsection*{Data availability}

The present paper has no associated data.


\begin{thebibliography}{10}
\bibitem{bib:AbatangeloDipierroValdinoci2025}
N. Abatangelo, S. Dipierro, E. Valdinoci:\
\emph{A Gentle Invitation to the Fractional World},
UNITEXT 176,
Springer Nature Switzerland AG, Cham, 2025.
    
\bibitem{BBBOW}
K. Ball, F. Barthe, W. Bednorz, K. Oleszkiewicz, P. Wolff:\
L1-smoothing for the Ornstein–Uhlenbeck semigroup.
\emph{Mathematika} \textbf{59.1} (2013), 160-168.

\bibitem{Ban}
R. Ba\~nuelos:\ Intrinsic ultracontractivity and eigenfunction estimates for Schr\"odinger operators.
\emph{Journal of Functional Analysis} \textbf{100} (1991), 181--206.

\bibitem{BCR}
F. Barthe, P. Cattiaux, C. Roberto:\ Interpolated inequalities between exponential and
Gaussian, Orlicz hypercontractivity and isoperimetry, \emph{Revista Matematica Iberoamericana} \textbf{22(3)} (2006) 993--1067.

\bibitem{BKPP}
K. Bogdan, D. Kutek, K. Pietruska-Pałuba:
\emph{Bregman variation of semimartingales}, 
arXiv:2412.18345.

\bibitem{BSchW}
B. B\"ottcher, R.L. Schilling, J. Wang:
\emph{L\'{e}vy-type processes: construction, approximation and sample path properties}.
L\'{e}vy matters III, Springer Lecture Notes in Mathematics 2099,  Springer, Cham 2013.

\bibitem{Chen-Song1}
Z.-Q. Chen, R. Song:\ Intrinsic ultracontractivity and conditional gauge for symmetric stable processes.\emph{Journal of Functional Analysis} \textbf{150(1)} (1997), 204--239. 

\bibitem{Chen-Song2}
Z.-Q. Chen, R. Song:\ Intrinsic ultracontractivity, conditional lifetimes and conditional gauge for
symmetric stable processes on rough domains. \emph{Illinois Journal of Mathematics} \textbf{44} (2000), 138--160. 

\bibitem{ChenKimWang}
X. Chen, P. Kim, J. Wang:\ Intrinsic ultracontractivity and ground state estimates of non-local Dirichlet
forms on unbounded open sets. \emph{Communications in Mathematical Physics} \textbf{366(1)} (2019), 67--117.

\bibitem{ChenWangDom}
X. Chen, J. Wang:\ Intrinsic ultracontractivity for general L\'evy processes on bounded open sets. \emph{Illinois Journal of Mathematics} \textbf{58(4)} (2014), 1117--1144.

\bibitem{bib:ChW2015}
X. Chen, J. Wang:
Intrinsic contractivity properties of Feynman--Kac semigroups for symmetric jump processes with infinite range jumps.
\emph{Frontiers of Mathematics in China} \textbf{10} (2015), 753--776.

\bibitem{bib:ChW2016}
X. Chen, J. Wang:
Intrinsic ultracontractivity of Feynman-Kac semigroups for symmetric jump processes.
\emph{Journal of Functional Analysis} \textbf{270} (2016), 4152--4195.

\bibitem{Davies}
E.B. Davies:\ \emph{Heat Kernels and Spectral Theory}. Cambridge University Press, Cambridge 1990.

\bibitem{DGS}
E.B. Davies, L. Gross, B. Simon:\ 
Hypercontractivity:\ a bibliographic review.
In \emph{Ideas and methods in quantum and statistical physics}, 370-389. Cambridge University Press, Cambridge, 1992.

\bibitem{bib:DS}
E.B. Davies, B. Simon:\
Ultracontractivity and the heat kernel for Schr\"odinger operators and Dirichlet Laplacians.
\emph{Journal of Functional Analysis} \textbf{59} (1984), 335--395.

\bibitem{bib:DC}
M. Demuth, J.A. van Casteren:
\emph{Stochastic Spectral Theory for Self-adjoint Feller Operators. A Functional Analysis Approach}.
Birkh\"auser, Basel 2000.

\bibitem{bib:DipierroGiacominValdinoci2024}
S. Dipierro, G. Giacomin, E. Valdinoci:\
\emph{The L\'evy Flight Foraging Hypothesis in Bounded Regions:\ Subordinate Brownian Motions and High-risk/High-gain Strategies},
Memoirs of the European Mathematical Society,
European Mathematical Society (EMS Press), 2024.

\bibitem{bib:Doob}
J.L. Doob:\ Conditional Brownian motion and the boundary limits of harmonic functions, 
\emph{Bulletin de la Société Mathématique de France} \textbf{85} (1957) 431--458.

\bibitem{Eckmann}
J.P. Eckmann:\ Hypercontractivity for anharmonic oscillators, \emph{Journal of Functional Analysis} \textbf{16} (1974) 388--406.

\bibitem{Eldan-Lee}
R. Eldan, J. R. Lee:
Regularization under diffusion and anti-concentration of the information content,
\emph{Duke Mathematical Journal} \textbf{167(5)} (2018), 969-993. 

\bibitem{GLMRS}
N. Gozlan, X.-M. Li, M. Madiman, C. Roberto, P.-M. Samson: Log-Hessian and Deviation Bounds for Markov Semi-Groups, and Regularization Effect in $L^1$, \emph{Potential Analysis}, \textbf{58} (2023), 123--158.

\bibitem{GMRS}
N. Gozlan, M. Madiman, C. Roberto, P.-M. Samson: Deviation inequalities for convex functions motivated by the Talagrand conjecture, \emph{Journal of Mathematical Sciences }, \textbf{238} (2019), 453--462.

\bibitem{Gross}
L. Gross: Logarithmic Sobolev inequalities, \emph{American Journal of Mathematics} \textbf{97(4)} (1975), 1061--1083.

\bibitem{Grzywny}
T. Grzywny:\ Intrinsic ultracontractivity for L\'evy processes. \emph{Probability and Mathematical Statistics} \textbf{28} (2008), 91--106.

\bibitem{Jacob}
N. Jacob:
\emph{Pseudo-Differential Operators and Markov Processes: Markov Processes and Applications. Vol.\ I, II, III}.
Imperial College Press, London 2001--2005.

\bibitem{bib:KK}
K. Kaleta, T. Kulczycki:\ Intrinsic ultracontractivity for Schr\"odinger operators based on fractional Laplacians.
\emph{Potential Analysis} \textbf{33} (2010) 313--339.

\bibitem{bib:KKL2018}
K. Kaleta, M. Kwa\'snicki, J. L\H orinczi:\
Contractivity and ground state domination properties for non-local Schr\"odinger operators.
\emph{Journal of Spectral Theory} \textbf{8} (2018), 165--189.

\bibitem{Kaleta-Lorinczi-AoP}
K. Kaleta, J. L\H orinczi:\
Pointwise eigenfunction estimates and intrinsic ultracontractivity-type properties of Feynman--Kac semigroups for a class of L\'evy processes,
\emph{Annals of Probability}, {\bf 43} (2015), 1350--1398.
	
\bibitem{KSch}	
K. Kaleta, R.L. Schilling:\
Progressive intrinsic ultracontractivity and heat kernel estimates for non-local Schrödinger operators.
\emph{Journal of Functional Analysis} \textbf{279 (6)} (2020), Article 108606.

\bibitem{KSz}	
K. Kaleta, P. Sztonyk:\ 
Upper estimates of transition densities for stable dominated semigroups.
\emph{Journal of Evolution Equations} \textbf{13} (2013), 633--650.

\bibitem{KimSong}
P. Kim, R. Song:\ Intrinsic ultracontractivity for non-symmetric L\'evy processes. \emph{Forum Mathematicum} \textbf{21(1)} (2009), 43--66.

\bibitem{Kul}
T. Kulczycki:\ Intrinsic ultracontractivity for symmetric stable process. \emph{Bulletin of the Polish Academy of Sciences. Mathematics} \textbf{46(3)} (1998), 325--334. 

\bibitem{bib:KS}
T. Kulczycki, B. Siudeja:
Intrinsic ultracontractivity of the Feynman--Kac semigroup for relativistic stable processes.
\emph{Transactions of the American Mathematical Society} \textbf{358} (2006), 5025--5057.

\bibitem{bib:Kulczycki-Sztonyk}
T. Kulczycki, K. Sztonyk: Intrinsic ultracontractivity for Schr\"odinger semigroups based on cylindrical fractional Laplacian on the plane. \emph{Semigroup Forum}, \textbf{111} (2025), 191--226.

\bibitem{Kwa}
M. Kwa\'snicki:\ Intrinsic ultracontractivity for stable semigroups on unbounded open sets.
\emph{Potential Analysis} \textbf{31(1)} (2009), 57--77.

\bibitem{Lehec}
J. Lehec:
Regularization in $L_1$ for the Ornstein-Uhlenbeck semigroup. 
\emph{Annales de la Faculté des sciences de Toulouse:\ Mathématiques}, \textbf{25 (1)} (2016), pp. 191--204.

\bibitem{LSW}
D. Lenz, P. Stollmann, D. Wingert: Compactness of Schr\"odinger semigroups, \emph{Mathematische Nachrichten}
\textbf{283} (2010) 94--103.

\bibitem{bib:LiebSeiringer2009}
E.H. Lieb, R. Seiringer:\
\emph{The Stability of Matter in Quantum Mechanics},
Cambridge University Press, Cambridge, 2009.

\bibitem{Nelson73}
E. Nelson:
The free Markoff field.
\emph{Journal of Functional Analysis} \textbf{12} (1973), 211–-227.

\bibitem{NIST}
{\it NIST Digital Library of Mathematical Functions}.
\newblock http://dlmf.nist.gov/, Release 1.1.2 of 2021-06-15.
\newblock F.~W.~J. Olver, A.~B. {Olde Daalhuis}, D.~W. Lozier, B.~I. Schneider,
R.~F. Boisvert, C.~W. Clark, B.~R. Miller, B.~V. Saunders, H.~S. Cohl, and
M.~A. McClain, eds.

\bibitem{Rao-Ren?}
M.M.~Rao, Z.D.~Ren:
\emph{Applications Of Orlicz Spaces}
CRC Press, 2002.

\bibitem{Reed-Simon}
M.~Reed, B.~Simon:
\newblock {\em Methods of modern mathematical physics. {IV}. {A}nalysis of
	operators}.
\newblock Academic Press [Harcourt Brace Jovanovich, Publishers], New
York-London, 1978.

\bibitem{RZ}
C. Roberto, B. Zegarli\'nski:\ Hypercontractivity for Markov semigroups, 
\emph{Journal of Functional Analysis} \textbf{282} (2022), 109439.

\bibitem{bib:Sat}
K. Sato:
\emph{L\'{e}vy Processes and Infinitely Divisible Distributions}.
Cambridge University Press, Cambridge 1999.

\bibitem{bib:Sch}
R.L. Schilling: An introduction to Lévy and Feller processes. In \emph{From Lévy-type processes to parabolic
SPDEs}, 
Advanced Courses in Mathematics CRM Barcelona, 1--126. Birkh\"auser/Springer, Cham, 2016.

\bibitem{bib:SchW}
R.L. Schilling, J. Wang:\ \emph{Strong Feller continuity of Feller processes and semigroups}.
Infinite Dimensional Analysis, Quantum Probability and Related Topics \textbf{15} (2012) 1250010.

\bibitem{Schmudgen}
K. Schm\"udgen:\ \emph{Unbounded Self-adjoint Operators on Hilbert Space}, Graduate Texts in Mathematics,
vol. \textbf{265}, Springer, Dordrecht, 2012.

\bibitem{Simon}
B. Simon:\ Schr\"odinger semigroups, \emph{Bulletin of the American Mathematical Society} \textbf{7} (1982) 447--526.

\bibitem{TTT}
M. Takeda, Y. Tawara, K. Tsuchida:\ Compactness of Markov and Schrödinger semigroups: a probabilistic
approach, \emph{Osaka Journal of Mathematics} \textbf{54} (2017) 517--532.

\bibitem{Talagrand}
M. Talagrand: 
A conjecture on convolution operators, and a non-Dunford-Pettis operator on $L^1$.
\emph{Israel Journal of Mathematics} \textbf{68} (1989), no. 1, 82–88.

\bibitem{Wang}
F.-Y. Wang:\ $\Phi$--entropy inequality and application for SDEs with jumps. \emph{Journal of Mathematical Analysis and Applications} \textbf{418(2)} (2014), 861--873.

\bibitem{WangWang}
F.-Y. Wang, J. Wang:\ Isoperimetric Inequalities for Non-Local Dirichlet Forms. \emph{Potential Analysis} \textbf{53} (2020), 1225--1253.

\bibitem{WW}
 F.-Y. Wang, J.-L. Wu:\ Compactness of Schr\"odinger semigroups with unbounded below potentials, \emph{Bulletin
des Sciences Math\'ematiques} \textbf{132} (2008) 679--689.

	\end{thebibliography}
\end{document}